\documentclass[12pt]{amsart}

\usepackage{geometry}                
\geometry{margin=1.5in,letterpaper}                   
\usepackage{graphicx}
\usepackage{amsthm,amssymb,amsmath}
\usepackage{subfigure}


\begin{document}
\title{The Genus of a Random Bipartite Graph}

\author{Yifan Jing}
\address{%
Department of Mathematics\\
Simon Fraser University\\
Burnaby, BC, Canada}
\email{yifanjing17@gmail.com}

\author{Bojan Mohar}
\address{%
Department of Mathematics\\
Simon Fraser University\\
Burnaby, BC, Canada}
\email{mohar@sfu.ca}

\thanks{B.M.~was supported in part by the NSERC Discovery Grant R611450 (Canada), by the Canada Research Chairs program, and by the Research Project J1-8130 of ARRS (Slovenia).}
\thanks{On leave from IMFM \& FMF, Department of Mathematics, University of Ljubljana.}%

\subjclass[2010]{Primary 05C10; Secondary 57M15}

\date{} 

\maketitle

\newtheorem{theorem}{Theorem}[section]
\newtheorem{lemma}[theorem]{Lemma}
\newtheorem{definition}[theorem]{Definition}
\newtheorem*{thm:associativity}{Theorem \ref{thm:associativity}}
\newtheorem*{thm:associativity2}{Theorem \ref{thm:associativity2}}
\newtheorem*{thm:associativity3}{Theorem \ref{thm:associativity3}}

\begin{abstract}
Archdeacon and Grable (1995) proved that the genus of the random graph $G\in\mathcal{G}_{n,p}$ is almost surely close to $pn^2/12$ if $p=p(n)\geq3(\ln n)^2n^{-1/2}$. In this paper we prove an analogous result for random bipartite graphs in $\mathcal{G}_{n_1,n_2,p}$.
If $n_1\ge n_2 \gg 1$, phase transitions occur for every positive integer $i$ when $p=\Theta((n_1n_2)^{-\frac{i}{2i+1}})$.
A different behaviour is exhibited when one of the bipartite parts has constant size, i.e. $n_1\gg1$ and $n_2$ is a constant. In that case, phase transitions occur when $p=\Theta(n_1^{-1/2})$ and when $p=\Theta(n_1^{-1/3})$.
\end{abstract}

\section{Introduction}



For a simple graph $G$, let $g(G)$ be the {\em genus} of $G$, that is, the minimum $h$ such that $G$ embeds into the orientable surface $\mathbb{S}_h$ of genus $h$, and let $\widetilde{g}(G)$ be the {\em non-orientable genus} of $G$ which is the minimum $c$ such that $G$ embeds into the non-orientable surface $\mathbb{N}_c$ with crosscap number $c$. The surface here is a compact two-dimensional manifold without boundary. We say $G$ is {\em 2-cell embedded} in a surface $S$ if each face of $G$ is homeomorphic to an open disk, and a {\em$k$-gon embedding} of $G$ is a 2-cell embedding in which every face is bounded by a cycle of length $k$.

Given a graph $G$, determining the genus of $G$ is one of the fundamental problems in topological graph theory. Youngs \cite{Y} showed that the problem of determining the genus of a connected graph $G$ is the same as determining a $2$-cell embedding of $G$ with minimum genus. The same holds for the non-orientable genus \cite{non}. It was proved by Thomassen \cite{NPC} that the genus problem is NP-complete. For further background on topological graph theory, we refer to \cite{top}.

The random graph $\mathcal{G}_{n,p}$ is a probability space whose objects are all (labelled) graphs defined on a vertex set $V$ of cardinality $n$, and each possible edge occurs with probability $p$ independently, i.e., a graph $G=(V,E)\in\mathcal{G}_{n,p}$ has probability $p^{|E|}(1-p)^{\binom{n}{2}-|E|}$. Similarly, one can define {\em random bipartite graphs} $\mathcal{G}_{n_1,n_2,p}$ as the probability space of all bipartite graphs with (labelled) bipartition $X\sqcup Y$, $|X|=n_1$, $|Y|=n_2$, where each edge $xy$ ($x\in X$, $y\in Y$) appears with probability $p$. In this paper we will always assume $n_1\geq n_2$ for the convenience. There are thousands of papers studying properties of random graphs; for more background about this fascinating area, see \cite{prob, ran}.

Stahl \cite{Stahl} was the first to consider the genus (in fact, the average genus) of random graphs. Almost concurrently, Archdeacon and Grable \cite{dense} studied the genus of random graphs in $\mathcal{G}_{n,p}$. They obtained the following result when $p=p(n)$ is not too small.

\begin{theorem}[Archdeacon and Grable \cite{dense}]\label{thm:1.1}
Let $\varepsilon>0$ and let $0<p<1$ with $p^2(1-p^2)\geq 8(\ln n)^4/n$. Then almost every graph $G$ in $\mathcal{G}_{n,p}$ satisfies
\begin{equation*}
(1-\varepsilon)\frac{pn^2}{12}\leq g(G)\leq(1+\varepsilon)\frac{pn^2}{12}
\end{equation*}
and
\begin{equation*}
(1-\varepsilon)\frac{pn^2}{6}\leq \widetilde{g}(G)\leq(1+\varepsilon)\frac{pn^2}{6}.
\end{equation*}
\end{theorem}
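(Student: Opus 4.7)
The plan is to establish matching lower and upper bounds on $g(G)$ separately; the non-orientable case follows by a parallel argument using $n-m+f=2-c$ in place of $n-m+f=2-2g$.

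For the \emph{lower bound} I would invoke Euler's formula: in any 2-cell embedding of $G$ on the orientable surface of genus $g$, one has $n - m + f = 2 - 2g$, where $m = |E(G)|$ and $f$ is the number of faces. Since $G$ is simple, every face is bounded by at least three edge-sides, so $f \le 2m/3$; rearranging gives $g \ge m/6 - n/2 + 1$. A standard Chernoff bound, valid under the density hypothesis $p^2(1-p^2)\ge 8(\ln n)^4/n$, yields $m \ge (1-\varepsilon/2)\, p\binom{n}{2}$ with probability $1-o(1)$, whence $g(G) \ge (1-\varepsilon)\, pn^2/12$ a.a.s. The analogous non-orientable computation gives $\widetilde g(G) \ge (1-\varepsilon)\, pn^2/6$.

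For the \emph{upper bound}, the task is to construct an explicit near-triangular 2-cell embedding of $G$. I would partition $V(G)$ into blocks $V_1,\dots,V_t$ of size $k$, with $k$ chosen small compared to $n$ but large enough that each induced subgraph $G[V_i]$ is a.a.s.\ nearly a $K_k$ (Chernoff gives $|E(G[V_i])| \ge (1-o(1))p\binom{k}{2}$). Starting from the Ringel--Youngs triangulation of $K_k$, delete the non-edges (each such deletion merges two triangular faces into one quadrilateral, not increasing the genus), obtaining a local embedding of $G[V_i]$ of genus $\sim pk^2/12$. The cross-block edges are then stitched in using either handles between block embeddings or near-quadrangular embeddings of the bipartite piece between $V_i$ and $V_j$, with the block parameters tuned so that the total genus telescopes to at most $(1+\varepsilon)\, pn^2/12$.

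The \emph{main obstacle} is the upper-bound construction, where two delicate points appear: (i) controlling the face merging caused by deleting non-edges inside each local triangulation, so that the number of deletions remains small compared to $p\binom{k}{2}$ (this is precisely what forces the density hypothesis); and (ii) amalgamating the block embeddings while spending only $O(pn^2/12 - t\cdot pk^2/12)$ extra handles on cross-block edges. Probabilistic concentration on edge counts both within blocks and between pairs of blocks governs both steps, and a careful optimization of $k$ and $t$ balances the two genus contributions. The non-orientable version is obtained along the same lines by substituting the non-orientable Ringel--Youngs triangulations, whose face-to-edge ratio is the same but whose Euler relation saves a factor of $2$, yielding $\widetilde g(G) \le (1+\varepsilon)\, pn^2/6$.
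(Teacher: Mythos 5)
This theorem is quoted from Archdeacon and Grable and is not proved in the paper, so the only comparison available is with the known proofs in the literature (Archdeacon--Grable's own argument, and the later R\"odl--Thomas method that this paper adapts to the bipartite setting). Your lower bound is correct and is the standard one: Euler's formula plus $3f\le 2m$ gives $g\ge m/6-n/2+1$, and concentration of $m$ around $p\binom{n}{2}$ finishes it. That part needs no changes.

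The upper bound, however, has a genuine gap, and in fact the central step as written is wrong. If you start from the Ringel--Youngs triangulation of $K_k$ and delete the non-edges of $G[V_i]$, the surface does not change: you obtain an embedding of $G[V_i]$ in the surface of genus roughly $k^2/12$, not an embedding of genus roughly $pk^2/12$. Deleting an edge merges two faces and leaves the ambient surface (hence the genus of the embedding you are holding) fixed; it says nothing about the minimum genus of the smaller graph. So your local embeddings are off by a factor of $1/p$, and the subsequent bookkeeping cannot recover this. The difficulty you are trying to sidestep --- constructing a $2$-cell embedding of a \emph{random} graph in which almost all faces are triangles --- is exactly the original problem, and no amount of block decomposition reduces it to the complete-graph case. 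The amalgamation step is also underpowered: most edges of $G$ are cross-block (a $1-1/t$ fraction), and even if each bipartite piece between $V_i$ and $V_j$ is given a near-quadrangular embedding of genus about $pk^2/4$, summing over the $\binom{t}{2}$ pairs already gives about $pn^2/8>pn^2/12$; moreover, genus is not additive over edge-disjoint subgraphs sharing many vertices, so ``telescoping'' the block genera is not justified. The actual proofs avoid all of this by building a single global rotation system: one finds a near-perfect matching in the hypergraph whose vertices are the (oriented) edges and whose hyperedges are closed trails of length $3$ (via the Frankl--R\"odl/Pippenger--Spencer matching theorem, Theorem~\ref{thm:hypergraphmatching} in this paper), removes the few blossoms, and applies Lemma~\ref{lem:blossom} to realize almost all of these trails simultaneously as faces. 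If you want to salvage your outline, that is the machinery your upper bound needs to import.
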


They also conjectured that almost every graph in $\mathcal{G}_{n,p}$ has an $\varepsilon$-near $k$-gon embedding (in which all but an $\varepsilon$-fraction of edges lie on the boundary of two $k$-gonal faces) on some orientable surface and on some non-orientable surface. R{\"o}dl and Thomas \cite{genus} resolved their conjecture and extended Theorem \ref{thm:1.1} to an even broader range of edge-probabilities.

\begin{theorem}[R{\"o}dl and Thomas \cite{genus}]\label{thm:1.2}
Let $\varepsilon>0$, let $i\geq1$ be an integer and assume that $n^{-\frac{i}{i+1}}\ll p\ll n^{-\frac{i-1}{i}}$. Then $G\in\mathcal{G}_{n,p}$ almost surely satisfies
\begin{equation*}
(1-\varepsilon)\frac{i}{4(i+2)}pn^2\leq g(G)\leq(1+\varepsilon)\frac{i}{4(i+2)}pn^2
\end{equation*}
and
\begin{equation*}
(1-\varepsilon)\frac{i}{2(i+2)}pn^2\leq \widetilde{g}(G)\leq(1+\varepsilon)\frac{i}{2(i+2)}pn^2.
\end{equation*}
\end{theorem}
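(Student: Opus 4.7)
\medskip
\noindent\emph{Proof plan.} The approach is to reduce both bounds to estimating $|F(\Pi)|$, the number of faces of a $2$-cell embedding $\Pi$ of $G$. By Youngs' theorem (and its non-orientable analogue) together with Euler's formula, $g(G)=1+\tfrac12(m-n-\max_{\Pi\text{ orientable}}|F(\Pi)|)$ and $\widetilde g(G)=2+m-n-\max_{\Pi\text{ non-orientable}}|F(\Pi)|$. Since $m=(1+o(1))p\binom{n}{2}$ a.s.\ and $pn^2\gg n$ in our range, with $k=i+2$ it suffices to show that a.s.\ $(1-\varepsilon)\,pn^2/k\le\max_{\Pi}|F(\Pi)|\le(1+\varepsilon)\,pn^2/k$, for both orientable and non-orientable $2$-cell embeddings.

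For the upper bound on $|F(\Pi)|$ (i.e., the lower bound on genera), note that in the range $p\ll n^{-(i-1)/i}$, the expected number of $\ell$-cycles for every $3\le\ell\le k-1$ is $\Theta(n^\ell p^\ell)=o(pn^2)$, and Chebyshev's inequality promotes this to an a.s.\ bound on the actual count. In any $2$-cell embedding, every face of length $\ell<k$ has a boundary closed walk containing a cycle of length $\le\ell<k$, and each such cycle $C$ appears in the boundary of at most $2|C|=O(k)$ faces because each edge lies on exactly two faces; consequently the total number of short faces is $o(pn^2)$ a.s. Splitting the identity $2m=\sum_f\ell(f)$ via the trivial bounds $\ell(f)\ge3$ and $\ell(f)\ge k$ on short and long faces then yields $|F|\le 2m/k+O(\text{\# short faces})\le(1+\varepsilon)\,pn^2/k$ a.s.

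For the lower bound on $|F(\Pi)|$ (i.e., the upper bound on genera), we must construct an explicit $2$-cell embedding with $\ge(1-\varepsilon)\,pn^2/k$ faces. The assumption $p\gg n^{-i/(i+1)}$ ensures that the expected number of $k$-cycles is $\Theta((np)^k)\gg pn^2$, enough to cover each edge roughly twice. The plan is to apply a R\"odl nibble / semi-random argument to the hypergraph whose hyperedges are the edge sets of the $k$-cycles of $G$; pseudorandomness of $\mathcal{G}_{n,p}$ in this range (concentrated degrees and codegrees) supplies the quasirandomness needed to produce, a.s., a family $\mathcal{C}$ of edge-disjoint $k$-cycles covering all but an $\varepsilon$-fraction of $E(G)$. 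One then converts $\mathcal{C}$ into a $2$-cell embedding by (i)~extending $\mathcal{C}$ to a \emph{face double cover} of $E(G)$ by adding a small number of ``garbage'' closed walks on the leftover edges, and (ii)~choosing, at each vertex, a cyclic order of the incident edges (plus a $\pm1$ signature in the non-orientable variant) so that the face-tracing algorithm on this rotation system returns exactly $\mathcal{C}$ together with the garbage. The main obstacle is step~(ii): an arbitrary family of edge-disjoint cycles is not automatically realizable as faces of any $2$-cell embedding, so the local rotation at each vertex must be selected under global consistency constraints, and it is in executing this topological realization cleanly that most of the work lies. An all-positive signature yields the orientable embedding, and random $\pm1$ signatures its non-orientable counterpart; in both cases the face count is $|\mathcal{C}|+o(pn^2)\ge(1-\varepsilon)\,pn^2/k$, completing the upper bounds on $g(G)$ and $\widetilde g(G)$.
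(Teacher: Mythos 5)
First, note that Theorem~\ref{thm:1.2} is quoted from R\"odl and Thomas and is not proved in this paper; the closest internal analogue is the proof of Lemma~\ref{lem:biembed} and Theorem~\ref{thm:1.3} for the bipartite case, which follows exactly the template you are attempting, so I will compare against that. Your reduction to face counting and your lower bound on the genus (count short cycles, show faces of length less than $k=i+2$ are negligible, then use $2m=\sum_f\ell(f)$) are correct and match the computation in \eqref{24}. The upper bound on the genus, however, has two genuine gaps. The first is quantitative: an \emph{edge-disjoint} family $\mathcal{C}$ of $k$-cycles covering $(1-\varepsilon)|E(G)|$ edges has only $|\mathcal{C}|\approx(1-\varepsilon)m/k\approx(1-\varepsilon)pn^2/(2k)$ members, which is \emph{half} of the $(1-\varepsilon)pn^2/k$ faces you need. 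Since $\sum_f\ell(f)=2m$, achieving $|F|\approx 2m/k$ forces essentially every edge to lie on \emph{two} $k$-gonal faces, and your "garbage" walks must then re-traverse every edge of $\bigcup\mathcal{C}$ once more --- about $m$ arc-traversals with no control on how few faces they form. With your construction the best guaranteed bound is $g\lesssim\frac{i+1}{2(i+2)}\cdot\frac{pn^2}{2}$, not $\frac{i}{2(i+2)}\cdot\frac{pn^2}{2}$. The fix used in the paper (and by R\"odl--Thomas) is to pass to the random orientation $D$ and its reverse $D^{-1}$, run the hypergraph matching theorem (Theorem~\ref{thm:hypergraphmatching}) on directed closed $k$-trails to get a matching $M$ in $D$, and then invoke the second clause of that theorem to get a second matching $M'$ in $D^{-1}$ disjoint from $M^{-1}$; together these give $\approx 2(1-\varepsilon)N/k$ trails using each edge at most once in each direction.

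The second gap is the one you flag yourself but do not close: realizability of the chosen trails as faces. This is not a minor "clean-up" step; an arbitrary arc-disjoint family of closed trails in $D\cup D^{-1}$ need \emph{not} be simultaneously realizable as faces of any rotation system, and the precise obstruction is the \emph{blossom} of Lemma~\ref{lem:blossom}: a set of trails through a vertex $v$ whose in/out arcs at $v$ chain up cyclically, exhausting the local rotation at $v$ and excluding the remaining edges there. The complete argument therefore needs (i) the realization lemma (blossom-free implies extendable to a rotation system with all trails as faces), and (ii) counting arguments showing that short blossoms are rare and long blossoms can be destroyed by deleting $o(pn^2)$ trails, as in \eqref{Tj}--\eqref{blossom-free}. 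Separately, your appeal to "pseudorandomness" for the nibble hides the codegree condition (2) of Theorem~\ref{thm:hypergraphmatching}: one must show that no two edges lie on more than a bounded number of $k$-cycles, which the paper does via Lemma~\ref{lem:path} plus a first-moment bound on pairs of vertices joined by many internally disjoint equal-length paths, and this only holds for $p$ near the bottom of the range, forcing the additional edge-splitting argument for larger $p$. As written, your step~(ii) is a statement of the problem rather than a proof.
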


In this paper, we will study the genus of random bipartite graphs, which plays an important role in approximating the genus of dense graphs \cite{JM}. The main results of this paper show that a result similar to Theorems \ref{thm:1.1} and \ref{thm:1.2} is also true for random bipartite graphs.

\begin{theorem}\label{thm:associativity}\label{thm:1.3}
Let $\varepsilon>0$ and $G\in\mathcal{G}_{n_1,n_2,p}$ be a random bipartite graph and suppose that $i\geq2$ is an integer. If $p$ satisfies $(n_1n_2)^{-\frac{i}{2i+1}}\ll p\ll (n_1n_2)^{-\frac{i-1}{2i-1}}$, $n_1/n_2<c$ and $n_2/n_1<c$ where $c$ is a positive real number, then we have a.a.s.
\begin{equation*}
(1-\varepsilon)\frac{i}{2i+2}pn_1n_2\leq g(G)\leq(1+\varepsilon)\frac{i}{2i+2}pn_1n_2
\end{equation*}
and
\begin{equation*}
(1-\varepsilon)\frac{i}{i+1}pn_1n_2\leq \widetilde{g}(G)\leq(1+\varepsilon)\frac{i}{i+1}pn_1n_2.
\end{equation*}
\end{theorem}

In Theorem \ref{thm:1.3} we request that $i\geq2$. For $i=1$ we have a stronger result that we discuss next. In this case $p$ is relatively large and we will show that $G\in\mathcal{G}_{n_1,n_2,p}$ will almost surely have an $\varepsilon$-near $4$-gon embedding.

\begin{theorem}\label{thm:associativity2}\label{thm:1.4}
Let $\varepsilon>0$ and $G\in\mathcal{G}_{n_1,n_2,p}$ be a random bipartite graph. If $n_1\geq n_2\gg1$ and $p\gg n_2^{-\frac{2}{3}}$, then we have a.a.s.
\begin{equation*}
(1-\varepsilon)\frac{pn_1n_2}{4}\leq g(G)\leq(1+\varepsilon)\frac{pn_1n_2}{4}
\end{equation*}
and
\begin{equation*}
(1-\varepsilon)\frac{pn_1n_2}{2}\leq \widetilde{g}(G)\leq(1+\varepsilon)\frac{pn_1n_2}{2}.
\end{equation*}
\end{theorem}

The above results exhibit phase transitions for every positive integer $i$, when $p=\Theta((n_1n_2)^{-\frac{i}{2i+1}})$. The genus in these critical ranges can be estimated within a constant factor as follows. Let $n=\sqrt{n_1n_2}$ and $\varepsilon>0$. It is easy to see that the genus of a graph $G$ satisfies the {\em edge-Lipschitz condition}, i.e., if $G$ and $G^\prime$ differ in only one edge, then $|g(G)-g(G^\prime)|\leq1$. For random variables satisfying the Lipschitz condition, one can analyze what happen when a phase transition occurs. The results in \cite[Chapter 7]{prob} show that the following holds. When $n_2=\Theta(n_1)$ and $p=cn^{-\frac{2i}{2i+1}}$ for $i\geq2$, there exists a number $f(c,n,p)$ with $\frac{i}{2i+2}\leq f(c,n,p)\leq\frac{i+1}{2i+4}$ such that $G\in\mathcal{G}_{n_1,n_2,p}$ will have
$$(1-\varepsilon)f(c,n,p)\, pn^2 \leq g(G) \leq (1+\varepsilon)f(c,n,p)\, pn^2, \quad \textrm{a.a.s.}$$

When a random bipartite graph $G\in\mathcal{G}_{n_1,n_2,p}$ satisfies $n_1\gg1$ and $n_2$ is a constant, the genus of $G$ has different behaviour.

\begin{theorem}\label{thm:associativity3}\label{thm:1.5}
Let $\varepsilon>0$ and $G\in\mathcal{G}_{n_1,n_2,p}$ where $n_1\gg 1$ and $n_2\geq3$ is a constant.
\begin{itemize}
\item[\rm{(a)}]
If $p\gg n_1^{-\frac{1}{3}}$ we have a.a.s. (as $n_1\to\infty$)
\begin{equation*}
(1-\varepsilon)\frac{n_1n_2p}{4}\,\Psi(p,n_2)\leq g(G)\leq(1+\varepsilon)\frac{n_1n_2p}{4}\,\Psi(p,n_2)
\end{equation*}
and
\begin{equation*}
(1-\varepsilon)\frac{n_1n_2p}{2}\,\Psi(p,n_2)\leq \widetilde{g}(G)\leq(1+\varepsilon)\frac{n_1n_2p}{2}\,\Psi(p,n_2),\\[1mm]
\end{equation*}
where $\Psi(p,n_2)=\sum_{i=2}^{n_2-1}\frac{i-1}{i+1}\binom{n_2-1}{i}(-p)^i$.
\item[\rm{(b)}]
If $n_1^{-\frac{1}{2}}\ll p\ll n_1^{-\frac{1}{3}}$, then a.a.s.
\begin{equation*}
g(G)=\biggl\lceil\frac{(n_2-3)(n_2-4)}{12}\biggr\rceil\quad and\quad \widetilde{g}(G)=\biggl\lceil\frac{(n_2-3)(n_2-4)}{6}\biggr\rceil
\end{equation*}
with a single exception that $\widetilde{g}(G)=3$ when $n_2=7$.
\item[\rm{(c)}]
If $p\ll n_1^{-\frac{1}{2}}$, then a.a.s. $g(G)=0$.
\end{itemize}
\end{theorem}

This result shows two phase transitions. When $p\ll n_1^{-1/2}$, a random bipartite graph is almost surely planar; after this first threshold, we obtain a subdivision of the complete graph on $n_2$ vertices (with additional vertices of degrees 0 or 1), and when $p\gg n_1^{-1/3}$, $G$ has an $\varepsilon$-near $4$-gon embedding a.a.s.

The paper is organized as follows. In the next section, we give basic definitions and properties in topological graph theory and discuss random graphs. Also, our main tools used in the proofs are presented. In Section $3$, we prove Theorems \ref{thm:1.3} and \ref{thm:1.4}. Section $4$ resolves the cases when one of the bipartition parts has constant size and contains the proof of Theorem~\ref{thm:1.5}.

\section{Preliminaries}%

We will use standard definitions and notation for graphs and probabilistic methods as given in \cite{Diestel} and \cite{prob,ran}. We use the following notation: $A(n)\sim B(n)$ means $\lim_{n\to\infty}A(n)/B(n)=1$, and $A(n)\ll B(n)$ means $\lim_{n\to\infty}A(n)/B(n)=0$. By $X\sqcup Y$ we denote the disjoint union of $X$ and $Y$, and we set $X\oplus Y=(X\times Y)\sqcup (Y\times X)$. We say an event $A(n)$ happens {\em asymptotically almost surely} (abbreviated {\em a.a.s.}) if $\mathbb{P}(A(n))\to1$ as $n\to\infty$.

We consistently use $G$ to denote a simple undirected graph, $D$ is always a digraph and $\mathcal{H}$ is a hypergraph. A vertex partition $\mathcal{P}=\{V_i\}_{i=1}^k$ is {\em equitable} if $V_i\cap V_j=\varnothing$ for every $1\leq i<j\leq k$ and the parts have size as equal as possible, i.e. $||V_i|-|V_j||\leq 1$ for all $i,j$. A {\em trail} in a graph $G$ (or a digraph $D$) is a (directed) walk that has no repeated edges. A {\em closed trail} is a trail that starts and ends at the same vertex. If $D$ is a digraph, then $D^{-1}$ is the digraph obtained from $D$ by replacing each arc $\overrightarrow{xy}$ with the reverse arc $\overrightarrow{yx}$.

Let $G$ be a simple graph. A {\em corresponding digraph} of $G$ is a random simple digraph $\mathcal{D}$ obtained from $G$ by randomly orienting each edge. Specifically, each digraph $D\in\mathcal{D}$ has $V(D)=V(G)$ and if $uv\in E(G)$ then either $\overrightarrow{uv}$ or $\overrightarrow{vu}$ is an edge of $D$, each has probability $1/2$ and the two events are exclusive. Also, the choices made for different edges are independent from each other. The corresponding digraph $\mathcal{D}$ of a random graph $\mathcal{G}$ is a family of digraphs defined on the same vertex set of graphs in $\mathcal{G}$, and when two vertices $u,v$ produce an edge with probability $p$ in $\mathcal{G}$, then $\overrightarrow{uv}$ occurs with probability $\frac{p}{2}$ and $\overrightarrow{vu}$ occurs with probability $\frac{p}{2}$ in $\mathcal{D}$, and those two events are exclusive.

Now we focus on the $2$-cell embeddings of a graph $G$. We say $\Pi=\{\pi_v\,|\,v\in V(G)\}$ is a {\em rotation system} if for each vertex $v$, $\pi_v$ is a cyclic permutation of the edges incident with $v$. The {\em Heffter-Edmonds-Ringel rotation principle} \cite[Theorem 3.2.4]{top} shows that every $2$-cell embedding of a connected graph $G$ in an orientable surface is uniquely determined (up to homeomorphisms of the surface) by its rotation system. Let $g(G)$ be the orientable genus of $G$ and let $\widetilde{g}(G)$ be the non-orientable genus of $G$. For $2$-cell embeddings we have the famous Euler's Formula.
\begin{theorem}[Euler's Formula]
Let $G$ be a connected graph which is 2-cell embedded in a surface $S$. If $G$ has $n$ vertices, $e$ edges and $f$ faces in $S$, then
\begin{equation}
\chi(S)=n-e+f.
\end{equation}
\end{theorem}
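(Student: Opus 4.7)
The plan is to observe that a 2-cell embedding of $G$ in $S$ equips $S$ with a finite CW-complex structure whose $0$-, $1$-, and $2$-cells are the vertices, open edges, and open faces respectively. By the foundational theorem of algebraic topology that the Euler characteristic of a finite CW-complex equals the alternating sum of its cell counts and is a topological invariant (agreeing with the alternating sum of Betti numbers), one concludes $\chi(S) = n - e + f$ immediately. This is the cleanest and essentially self-evident route.

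A purely combinatorial alternative is induction on the edge count $e$. The base case $e = 0$ forces the embedding to be a single vertex on the sphere, giving $n - e + f = 1 - 0 + 1 = 2 = \chi(\mathbb{S}_0)$. For the inductive step, if some edge $uv$ bounds two distinct faces $F_1, F_2$, delete it: the union $F_1 \cup uv \cup F_2$ is two open disks glued along a boundary arc, hence a single open disk, so the embedding stays 2-cell and both $e$ and $f$ drop by one while $n$ is unchanged. Otherwise every edge has the same face on both sides; if a non-loop edge $uv$ exists, contract it, which preserves $S$ and the face structure while reducing both $n$ and $e$ by one. Iterating these moves reduces any connected 2-cell embedding to a ``bouquet'' of $e'$ loops at a single vertex bounding a single face, which realizes $S$ as the quotient of a $2e'$-gon under identifications prescribed by the rotation system; the polygonal representation yields $\chi(S) = 1 - e' + 1 = 2 - e'$, matching $n - e + f$.

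The main obstacle in either route is the topological input. In the CW approach one must accept the topological invariance of the Euler characteristic, which is established via singular homology. In the inductive approach one must carefully verify that the reduction moves preserve 2-cellularity --- in particular, analyzing how the rotation system and face tracings behave under edge contraction (the cyclic orderings at $u$ and $v$ splice coherently at the new vertex) and why merging two adjacent open disks along a single boundary arc again yields an open disk (even when the two faces share other boundary edges, which after deletion simply reappear twice on the merged boundary walk without breaking the disk property). One then still needs to appeal to the classification of surfaces, or to the CW Euler characteristic, to handle the terminal bouquet case.
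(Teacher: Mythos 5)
The paper does not prove this statement at all: Euler's Formula is quoted as a classical background fact (standard references are \cite{top} and \cite{Diestel}), so there is no in-paper argument to compare yours against. Your outline is a correct rendering of the two standard proofs. The CW-complex route is the cleanest, and you correctly identify its one nontrivial input, the topological invariance of the Euler characteristic. Your inductive route is also sound as sketched: an edge bounding two distinct faces appears exactly once on each of their boundary walks, so deleting it glues two closed disks along a single boundary arc and yields a disk (and such an edge is never a bridge, so connectivity is preserved); when every edge has the same face on both sides, connectedness of the surface forces a single face, and contracting non-loop edges leads to the one-vertex, one-face bouquet, i.e.\ the polygonal presentation of $S$. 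Two small caveats you implicitly handle but should make explicit if writing this out in full: the statement as given requires $G$ to be connected (for disconnected $G$ a 2-cell embedding in a single connected surface does not exist, and the formula must be adjusted by the number of components), and the terminal bouquet case still needs either the classification of surfaces or the CW computation, so the inductive route does not actually avoid topological input --- it only relocates it, as you note.
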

Here $\chi(S)$ is the {\em Euler characteristic} of the surface $S$, where $\chi(S)=2-2h$ when $S=\mathbb{S}_h$ and $\chi(S)=2-c$ when $S=\mathbb{N}_c$.

Given a digraph $D$, a {\em blossom} of length $l$ with {\em center} $v$ and {\em tips} $\{v_1,v_2,\dots,v_l\}$ is a set $\mathcal{C}$ of $l$ directed cycles $\{ C_1,C_2,\dots,C_l\}$, where $\overrightarrow{v_{i}v},\overrightarrow{vv_{i+1}}\in C_i$, for $i=1,2,\dots,l$, with $v_{l+1}=v_1$. A $k$-blossom is a blossom, all of whose elements are directed $k$-cycles. A blossom of length $l$ is {\em simple} if either $l\geq3$ or $l=2$ and $C_1\neq C_2^{-1}$.
\begin{figure}[h]
\centering
\includegraphics[width=2.2in]{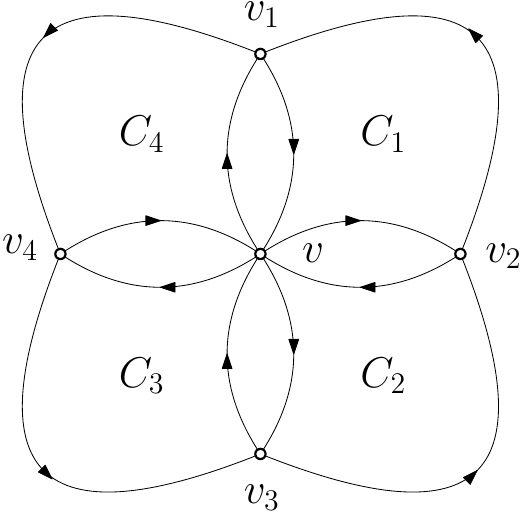}
\caption{A 3-blossom of length $4$ with center $v$ and tips $v_1,v_2,v_3,v_4$.}
\end{figure}

Let $\mathcal{C}$ be a family of arc-disjoint closed trails in $D\cup D^{-1}$. We say that $\mathcal{C}$ is {\em blossom-free} if no subset of $\mathcal{C}$ forms a blossom centered at some vertex. The following lemma is a slight strengthening of \cite[Lemma 2.1]{genus}; the proof is elementary and we omit details.
\begin{lemma}\label{lem:blossom}
Let $G$ be a graph and let $D$ be a corresponding digraph. Suppose that $\mathcal{C}_1$ and $\mathcal{C}_2$ are sets of arc-disjoint closed trails in $D$ and $D^{-1}$ (respectively) such that their union $\mathcal{C}_1\cup\mathcal{C}_2$ is blossom-free in $D\cup D^{-1}$. Then there exists a rotation system $\Pi$ of $G$ such that every closed trail in $\mathcal{C}_1\cup\mathcal{C}_2$ is a face of $\Pi$.
\end{lemma}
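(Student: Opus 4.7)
My plan is to construct the rotation $\pi_v$ at each vertex $v \in V(G)$ so that it extends a partial rotation imposed by the closed trails, and then to appeal to the Heffter-Edmonds-Ringel principle.

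At each vertex $v$, define a partial map $\sigma_v$ on $E(v)$, the set of edges incident with $v$, by setting $\sigma_v(uv) = vw$ whenever some closed trail $C \in \mathcal{C}_1 \cup \mathcal{C}_2$ passes through $v$ using arc $\overrightarrow{uv}$ immediately followed by $\overrightarrow{vw}$. Since $\mathcal{C}_1$ is arc-disjoint in $D$ and $\mathcal{C}_2$ is arc-disjoint in $D^{-1}$, each incoming arc $\overrightarrow{uv}$ at $v$ and each outgoing arc $\overrightarrow{vw}$ at $v$ is used by at most one trail of $\mathcal{C}_1 \cup \mathcal{C}_2$. Consequently $\sigma_v$ is a well-defined partial bijection on $E(v)$.

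The central claim is that blossom-freeness forces $\sigma_v$ to be acyclic. A cycle $e_1 \to e_2 \to \cdots \to e_l \to e_1$ in $\sigma_v$ with $e_i = v u_i$ is witnessed by a sequence of trail passages through $v$; after splitting any closed trail that visits $v$ more than once at its repeated visits into arc-disjoint closed sub-trails, these passages are supplied by $l$ arc-disjoint directed closed trails $C_1, \ldots, C_l$ satisfying $\overrightarrow{u_i v}, \overrightarrow{v u_{i+1}} \in C_i$, that is, a blossom of length $l$ centered at $v$ with tips $u_1, \ldots, u_l$, contradicting the hypothesis. Since $\sigma_v$ is therefore an acyclic partial bijection, it decomposes into disjoint chains (maximal directed paths) together with some isolated edges of $E(v)$; arranging these chains and isolated edges in an arbitrary linear order and cyclically concatenating them yields a cyclic permutation $\pi_v$ of $E(v)$ that extends $\sigma_v$.

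By the Heffter-Edmonds-Ringel principle, $\Pi = \{\pi_v\}_v$ determines a $2$-cell embedding of $G$ in an orientable surface, whose faces are traced by the standard rule: at $v$, an incoming arc $\overrightarrow{uv}$ is followed along the face boundary by $\overrightarrow{vw}$ with $vw = \pi_v(uv)$. Because $\pi_v$ agrees with $\sigma_v$ on every edge at $v$ where $\sigma_v$ is defined, the face walk starting at any arc of a trail $C \in \mathcal{C}_1 \cup \mathcal{C}_2$ reproduces $C$ arc by arc, so $C$ appears as a face of $\Pi$. The main obstacle is the middle step: one must check carefully that a cycle of $\sigma_v$ really yields a genuine blossom, which is delicate when the arrows of the cycle are contributed by a single trail revisiting $v$ several times and one is forced to decompose that trail at $v$ into arc-disjoint closed sub-trails in order to fit the blossom definition.
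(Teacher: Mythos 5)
Your overall architecture --- define the partial successor map $\sigma_v$ at each vertex, check it is a partial bijection, argue that blossom-freeness forces it to be acyclic, extend it to a cyclic rotation, and invoke the face-tracing rule --- is exactly the standard route (the paper omits the proof, deferring to R\"odl--Thomas), and every step except the acyclicity claim is fine. The gap sits precisely where you flagged it, and your proposed repair does not work, for two reasons. First, the closed sub-trails obtained by splitting a trail $T$ at its repeated visits to $v$ are not elements of $\mathcal{C}_1\cup\mathcal{C}_2$; the hypothesis only says that no \emph{subset of} $\mathcal{C}_1\cup\mathcal{C}_2$ forms a blossom, so exhibiting a blossom among sub-trails contradicts nothing. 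Second, the splitting mis-pairs the arcs: if $T$ visits $v$ with entry arcs $a_1,\dots,a_k$ and exit arcs $b_1,\dots,b_k$ (where $b_r$ immediately follows $a_r$), the closed sub-trails are $b_r\cdots a_{r+1}$, so the sub-trail containing the entry arc $a_r=\overrightarrow{u_jv}$ of a given passage contains $b_{r-1}$ rather than $b_r=\overrightarrow{vu_{j+1}}$; no sub-trail carries both arcs of a single passage, so the claimed blossom structure is not even produced.

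The correct repair rests on two observations. (i) In a cycle $e_1\to\cdots\to e_l\to e_1$ of $\sigma_v$ with $e_m=vu_m$, the passage arriving at $e_{m+1}$ uses the arc $\overrightarrow{vu_{m+1}}$ while the passage departing from $e_{m+1}$ uses the reverse arc $\overrightarrow{u_{m+1}v}$; one of these lies in $D$ and the other in $D^{-1}$, so consecutive passages are realized by trails from different halves and in particular by \emph{distinct} trails $T_m\neq T_{m+1}$ (and $l$ is even). (ii) The blossom definition only asks that $\overrightarrow{v_iv}$ and $\overrightarrow{vv_{i+1}}$ \emph{lie on} $C_i$, not that they be consecutive on it. Hence if $T_j=T_k$ with $j<k$, this single trail contains both $\overrightarrow{u_jv}$ and $\overrightarrow{vu_{k+1}}$, and you may shortcut the cyclic sequence to $T_j,T_{k+1},\dots,T_{j-1}$ with tips $u_j,u_{k+1},\dots,u_{j-1}$; this is again a closed sequence with distinct consecutive trails, distinct tips, and strictly smaller length. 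Iterating until all trails are distinct --- the process cannot drop below length $2$ by (i) --- yields a genuine blossom that \emph{is} a subset of $\mathcal{C}_1\cup\mathcal{C}_2$, giving the desired contradiction. With the acyclicity claim established this way, the rest of your proof goes through.
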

For every $\varepsilon>0$, an $\varepsilon$-{\em near} $k$-{\em gon embedding} $\Pi$ is a rotation system of $G$ such that $kf_k(\Pi)\geq2(1-\varepsilon)|E(G)|$, where $f_k(\Pi)$ is the number of faces of length $k$ of $\Pi$.

The following result from \cite{FR} (see also \cite{PS,genus} where its current formulation appears) will be our main tool for constructing near-optimal embeddings of random graphs.
\begin{theorem}\label{thm:hypergraphmatching}
Let $\varepsilon>0$ be a real number and $d\geq2$ be an integer. Then there exist a positive real number $\delta$ and an integer $N_0$ such that for every $N\geq N_0$ the following holds. If $\Delta$ is a real number and if $\mathcal{H}$ is a $d$-uniform hypergraph with $|V(\mathcal{H})|=N$ such that
\begin{enumerate}
\item $|\{x\in V(\mathcal{H})\ |\ (1-\delta)\Delta\leq \deg(x)\leq(1+\delta)\Delta\}|\geq(1-\delta)N$,
\item for every $x,y\in V(\mathcal{H})$ with $x\neq y$, $|\{e\in E(H)\mid x,y\in e\}|<\delta\Delta$,
\item at most $\delta N\Delta$ hyperedges of $\mathcal{H}$ contain a vertex $v\in V(\mathcal{H})$ with $\deg(v)>(1+\delta)\Delta$,
\end{enumerate}
then $\mathcal{H}$ has a matching of size at least $(1-\varepsilon)N/d$. Moreover, for every matching $M$ in $\mathcal{H}$, there exists a matching $M^\prime$ in $\mathcal{H}$ with $M\cap M^\prime=\varnothing$, and with $|M^\prime|\geq(1-\varepsilon)N/d$.
\end{theorem}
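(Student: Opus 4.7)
The plan is to establish the existence of the nearly-perfect matching by the semi-random method (the Rödl nibble), and then to deduce the ``moreover'' clause by applying the main statement to a slightly modified hypergraph. This is the approach pioneered by Frankl and Rödl, and the hypotheses (1)--(3) are exactly what is needed to maintain approximate regularity throughout the iteration.

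First I would run many nibble rounds. In each round, every hyperedge of $\mathcal{H}$ is selected independently with probability $p=\gamma/\Delta$, for a small constant $\gamma>0$. The selected hyperedges that meet no other selected hyperedge form a random partial matching, which is added to the matching under construction and whose vertices are then deleted. A direct calculation using condition (2) to bound the probability that two selected edges collide at a common vertex shows that the expected fraction of vertices surviving is approximately $e^{-\gamma d}$, and the expected post-round degree of a surviving vertex is approximately $\Delta\, e^{-\gamma d}$, so approximate regularity is preserved in expectation.

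Next I would establish concentration of these quantities via Azuma's inequality on a Doob martingale that exposes the edge selections one at a time: the codegree bound in condition (2) makes each martingale difference small, yielding exponentially small failure probability. Hence, with high probability, all but a $o(1)$ fraction of surviving vertices have new degree in $(1\pm\delta')\Delta\, e^{-\gamma d}$, and a similar argument controls codegrees, so the residual hypergraph still satisfies conditions (1)--(3) with only mildly relaxed parameters. Iterating $T=O(\gamma^{-1}\log(1/\delta))$ rounds drives the residual maximum degree down to an arbitrarily small fraction of $\Delta$, after which a greedy matching covers all but at most $\varepsilon N/d$ of the remaining vertices. Condition (3) is used throughout to ensure that the small set of initially high-degree vertices affects only a negligible number of hyperedges, so their presence does not damage the overall deficit.

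For the ``moreover'' clause, I would apply the main result to $\mathcal{H}'=\mathcal{H}\setminus M$, the hypergraph obtained by deleting the hyperedges of $M$. Since $M$ is a matching, deleting it decreases the degree of each vertex by at most one and the codegree of each pair by at most one; for $\Delta$ large (which must hold for (2) and (3) to be nontrivial), this is a negligible perturbation, so $\mathcal{H}'$ still satisfies conditions (1)--(3) with $\delta$ replaced by, say, $2\delta$. The matching produced by the first part is then the desired $M'$, edge-disjoint from $M$. The principal obstacle, as is standard for the nibble, is the joint concentration of degrees and codegrees across many iterations; condition (2) is tailored exactly so that the Lipschitz coefficients of the relevant martingales are small enough for Azuma's inequality to close the induction.
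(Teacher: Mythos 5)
This statement is not proved in the paper at all: it is imported verbatim from the literature (Frankl--R\"odl, with the present formulation due to Pippenger--Spencer and R\"odl--Thomas), and the authors use it as a black box. So there is no internal proof to compare against; what you have written is a sketch of the standard semi-random (``nibble'') proof of that cited theorem. Your overall architecture is the right one: iterated random bites with selection probability $\gamma/\Delta$, preservation of approximate regularity using the codegree hypothesis (2), concentration to close the induction, a greedy clean-up at the end, and the observation that the few hyperedges touching atypical vertices, controlled by (3), contribute only a negligible deficit. Your reduction for the ``moreover'' clause is also the standard correct one: deleting the matching $M$ changes each degree and codegree by at most $1$, and since any nonempty hypergraph satisfying (2) must have $\delta\Delta>1$, this perturbation is absorbed by replacing $\delta$ with $\delta/2$ at the outset; the resulting matching in $\mathcal{H}\setminus M$ is automatically edge-disjoint from $M$.

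Two caveats. First, some of your quantitative claims inside a single nibble round are not the ones the computation actually produces: with selection probability $\gamma/\Delta$, the probability that a vertex survives a round is roughly $e^{-\gamma}$ (the probability of lying in no selected edge), while the degree of a surviving vertex shrinks by roughly $e^{-\gamma(d-1)}$ (its incident edges survive only if their other $d-1$ vertices are uncovered); your stated factor $e^{-\gamma d}$ for both is off, though this does not affect the viability of the scheme since only the stability of the ratio and the geometric decay of the vertex set matter. Second, the concentration step is the technical heart of the theorem and you only assert it: the Doob martingale that exposes edge selections has Lipschitz coefficients governed by codegrees, and verifying that Azuma's inequality beats the union bound over all vertices and all rounds (and that codegree control itself propagates) is precisely where the published proofs expend most of their effort. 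As a blueprint your proposal is sound and matches the known proof of the cited result, but it is a sketch rather than a complete argument.
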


Similarly as for undirected graphs (see \cite[Lemma 5.4.2]{top}), we have the following property on digraphs. We say that two paths from $x$ to $y$ are \emph{internally disjoint} if they have no other vertices in common except $x$ and $y$. The same definition applies in the case of closed paths (when $x=y$).

\begin{lemma}\label{lem:path}
For any positive integers $f$ and $k$, there exists an integer $K=K(f,k)$ such that the following holds for every
digraph $D$.  If $D$ contains vertices $x,y\in V(D)$ (possibly $x=y$) and there are $K$ pairwise different $(x,y)$-trails of length $f$ in $D$, then there exist vertices $u,v\in V(D)$ (possibly $u=v$) and $k$ internally disjoint $(u,v)$-paths (closed paths if $u=v$), all of the same length $l$, where $1\leq l\leq f$.
\end{lemma}

\begin{proof}
The proof is by induction on $f$ for the values $K(f,k)$ defined recursively: $K(1,k)=k$ and $K(f,k) = 2kf^3\,K(f-1,k)^2$. When $k=0$ there is nothing to prove, and if $f=1$, we have $k$ parallel edges (loops if $x=y$) joining $x$ and $y$, and these form the desired internally disjoint paths (or closed paths).

Suppose now that $k>0$ and $f\ge2$ and that we have a set $\mathcal T$ of $K(f,k)$ trails of length $f$ from $x$ to $y$.
If one of the edges $az$ is the initial arc of $K(f-1,k)$ of the trails in $\mathcal T$, then we apply the induction to the corresponding family of $(z,y)$-trails of length $f-1$. Otherwise, there is a subset $\mathcal T'$ of $\mathcal T$ containing
$K(f,k)/K(f-1,k) = 2kf^3 K(f-1,k)$ trails from $x$ to $y$, whose initial edges are all distinct. Each such trail $T$ either contains a cycle (closed path from $x$ and back to $x$) or a path from $x$ to $y$ which starts with the same edge as $T$. We may assume that we have a path from $x$ to $y$ in at least half of the cases (otherwise we similarly treat closed paths at $x$). So we have a set $\mathcal P$ of $kf^3K(f-1,k)$ paths from $x$ to $y$, all starting with different edges.

Suppose that $\mathcal P$ contains a path $P$ such that $f^3K(f-1,k)$ other paths intersect it internally. At least $f^2K(f-1,k)$ of those paths intersect $P$ at the same vertex $z$ and at least $K(f-1,k)$ of those have the same length $l$ from $x$ to $z$. We complete the proof by applying induction to this set of paths of length $l<f$.
If the above case does not apply, each path in $\mathcal P$ intersects internally fewer than $f^3K(f-1,k)$ other paths. Thus, $\mathcal P$ contains at least $fk$ paths that are pairwise internally disjoint, of which at least $k$ must have the same length.
\end{proof}



\section{Genus of random bipartite graphs}%

In this section we treat random bipartite graphs in $\mathcal{G}_{n_1,n_2,p}$. Let us first consider the case when $n_1$ and $n_2$ have about the same magnitude.

\begin{lemma}\label{lem:biembed}
Let $\varepsilon>0$ and $G\in\mathcal{G}_{n_1,n_2,p}$ be a random bipartite graph on vertex set $X\sqcup Y$ with $|X|=n_1\geq n_2=|Y|$. If there exist a positive real number $c$ and a positive integer $i$ such that $n_1/n_2<c$, and $p\gg (n_1n_2)^{-\frac{i}{2i+1}}$, then a.a.s. $G$ has an $\varepsilon$-near $(2i+2)$-gon embedding.
\end{lemma}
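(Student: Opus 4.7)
The plan is to adapt the hypergraph matching strategy of R\"odl--Thomas to the bipartite setting: build a $(2i+2)$-uniform hypergraph whose hyperedges are directed $(2i+2)$-cycles in a random orientation of $G$, apply Theorem \ref{thm:hypergraphmatching} to extract a near-perfect matching, and use Lemma \ref{lem:blossom} to realize the resulting arc-disjoint cycle family as faces of a rotation system. Bipartiteness confines cycle lengths to even values, and the scaling $(n_1n_2)^{-i/(2i+1)}$ is precisely the threshold at which enough directed $(2i+2)$-cycles pass through a typical arc for the matching theorem to be applicable.

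First I would orient each edge of $G$ uniformly at random to obtain the digraph $D$, and let $\mathcal{H}$ be the $(2i+2)$-uniform hypergraph with vertex set $A(D)$ whose hyperedges are the arc-sets of directed $(2i+2)$-cycles in $D$. Writing $N=|A(D)|\sim n_1n_2p$, the expected number of $(2i+2)$-cycles through a fixed arc $\overrightarrow{xy}$ with $x\in X$, $y\in Y$ is
$$
\Delta \;=\; \Theta\bigl((n_1n_2)^{i}\,p^{2i+1}\bigr),
$$
since one must choose $i$ further vertices in each of $X$ and $Y$ and the remaining $2i+1$ arcs must be oriented consistently, each with probability $p/2$. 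The hypothesis $p\gg(n_1n_2)^{-i/(2i+1)}$ is exactly what forces $\Delta\to\infty$. To verify the hypotheses of Theorem \ref{thm:hypergraphmatching}, condition (1) follows from second-moment concentration on the cycle count through a fixed arc; the codegree in condition (2) is $O\bigl((n_1n_2)^{i-1}p^{2i}\bigr)=\Delta\cdot O(1/(n_1n_2p))=o(\Delta)$ since $n_1n_2p\gg(n_1n_2)^{1/(2i+1)}\to\infty$; and condition (3) follows from a similar variance estimate controlling atypically heavy arcs. Applying the theorem yields a family $\mathcal{C}_1$ of at least $(1-\varepsilon_0)N/(2i+2)$ arc-disjoint directed $(2i+2)$-cycles in $D$, and the same argument applied to $D^{-1}$ yields $\mathcal{C}_2$.

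To invoke Lemma \ref{lem:blossom}, the union $\mathcal{C}_1\cup \mathcal{C}_2$ must be blossom-free in $D\cup D^{-1}$. Lemma \ref{lem:path} is the lever: many cycles through a common pair of arcs at a center $v$ would force many internally disjoint directed paths of a common length $l\leq 2i$ between some pair of vertices. A first-moment calculation, carried out uniformly over admissible blossom lengths, shows that for the given range of $p$ the expected number of such path configurations (for $g$ large enough in terms of $i$) is $o(N)$, so the total number of simple blossoms in $\mathcal{C}_1\cup\mathcal{C}_2$ is $o(N/(2i+2))$ a.a.s. Deleting one cycle per blossom destroys all of them while leaving at least $(1-\varepsilon)N/(2i+2)$ cycles in each family. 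Lemma \ref{lem:blossom} then produces a rotation system $\Pi$ of $G$ every face of which corresponds to a surviving cycle, so
$$
(2i+2)\,f_{2i+2}(\Pi) \;\geq\; 2(1-\varepsilon)\,|E(G)|,
$$
which is an $\varepsilon$-near $(2i+2)$-gon embedding.

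The step I expect to be the main obstacle is the blossom control: the first-moment bound must be pushed uniformly over all blossom lengths $l\geq 2$, and one must carefully track how the cycles of $\mathcal{C}_1$ and $\mathcal{C}_2$ interact in $D\cup D^{-1}$. The ``moreover'' clause of Theorem \ref{thm:hypergraphmatching} may be needed here to produce $\mathcal{C}_1$ and $\mathcal{C}_2$ with an extra disjointness property. The concentration arguments for conditions (1) and (3) are more mechanical but still require care, since the variance of the $(2i+2)$-cycle count through an arc grows rapidly with $i$; the bounded-ratio assumption $n_1/n_2<c$ and $n_2/n_1<c$ is used throughout to keep the two parts comparable so that all these estimates can be expressed uniformly in the single parameter $n_1n_2$.
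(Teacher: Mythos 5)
Your overall architecture matches the paper's: random orientation, the $(2i+2)$-uniform hypergraph on $A(D)$ with hyperedges the closed $(2i+2)$-trails, Theorem \ref{thm:hypergraphmatching} (including its ``moreover'' clause to keep $\mathcal{C}_2$ disjoint from $\mathcal{C}_1^{-1}$), blossom removal, and Lemma \ref{lem:blossom}. But there are two genuine gaps. The first and most serious is that every probabilistic estimate you rely on beyond condition (1) requires an \emph{upper} bound on $p$, while the lemma must cover all $p$ up to a constant. The codegree bound for condition (2) is not the expected codegree $O((n_1n_2)^{i-1}p^{2i})$ you quote but a \emph{uniform} bound over all pairs of arcs; the paper gets it from Lemma \ref{lem:path} plus a first-moment bound showing that a.a.s.\ no two vertices are joined by $8i+2$ internally disjoint equal-length paths --- and that first moment is $o(1)$ only when $p\ll n^{-(4i-1)/(4i+1)}$ (for $p$ constant, such path systems certainly exist). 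Likewise the expected number of $(2i+2)$-blossoms of length $j$ is of order $n\Delta^j$, which explodes for large $p$ and is never $o(N)$ summed over all $j$. The paper therefore proves the lemma first only in the window $(n_1n_2)^{-\frac{i}{2i+1}}\ll p\ll n^{-\frac{2i-\varepsilon_1}{2i+1-\varepsilon_1}}$, and handles larger $p$ by randomly partitioning $A(D)$ into $t$ parts of density $p/t$ inside that window, applying the sparse case to each part, and taking the union (blossom-freeness is preserved because the parts are arc-disjoint). Your proposal has no analogue of this sparsification step, and without it the argument fails for, say, constant $p$.

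The second gap is in the blossom control itself. A first-moment calculation ``uniformly over admissible blossom lengths'' cannot work: even in the sparse window, $\mathbb{E}(T(j))$ grows geometrically in the blossom length $j$ (each additional petal costs a factor $\Delta\to\infty$), so the paper only runs the first-moment argument for $j\le 1/\varepsilon_1$ and disposes of longer blossoms by a purely combinatorial degree count (a blossom of length $\ge 1/\varepsilon_1$ centered at $v$ consumes $\ge 1/\varepsilon_1$ distinct tips among $v$'s neighbours, and after the short-blossom cleanup the tip sets of distinct blossoms at $v$ are disjoint, so there are at most $2\varepsilon_1 N$ long blossoms in total). Relatedly, you invoke Lemma \ref{lem:path} for the blossom step, but in the paper it is the tool for the codegree condition (2); the blossom bound is a direct count of blossom configurations, not of path systems. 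Neither issue is fatal to the strategy, but both require arguments you have not supplied.
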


\begin{proof}
Choose $0<\varepsilon_1<\frac{1}{2}$, $\varepsilon_0=\frac{3i+4}{1-\varepsilon_1}\varepsilon_1$, such that $\varepsilon_0<1/2$ and $\varepsilon\geq\frac{5\varepsilon_0}{1+\varepsilon_0}$. Let $n=\sqrt{n_1n_2}$. Then $p\gg n^{-\frac{2i}{2i+1}}$. Let us first assume that $p\ll n^{-\frac{2i-\varepsilon_1}{2i+1-\varepsilon_1}}$. Let $D\in\mathcal{D}$ be the corresponding digraph of $\mathcal{G}_{n_1,n_2,p}$. Consider the following hypergraph $\mathcal{H}$, where $V(\mathcal{H})$ is the edge set of $D$ and $E(\mathcal{H})$ is the set of closed trails of $D$ of length $2i+2$. Let $d=2i+2,\delta=\frac{\varepsilon_1}{1-\varepsilon_1}$ and $\Delta=n_1^{i}n_2^{i}(\frac{p}{2})^{2i+1}$. We claim that our hypergraph $\mathcal{H}$ satisfies all three conditions in Theorem \ref{thm:hypergraphmatching}, a.a.s.

To prove that condition (1) holds, let $N=|V(\mathcal{H})|$. We have
\begin{equation}
\begin{split}
&\mathbb{E}(N)=n_1n_2p,\\
&\mathbb{E}(N^2)=n_1n_2p(n_1-1)(n_2-1)p+O(n_1^2n_2p^2+n_2^2n_1p^2).
\end{split}
\end{equation}
By Chebyshev's inequality,
\begin{equation}
\mathbb{P}(|N-\mathbb{E}(N)|\geq\varepsilon_1n_1n_2p)\leq\frac{\mathbb{E}(N^2)-\mathbb{E}^2(N)}{\varepsilon_1^2\mathbb{E}^2(N)}=O\biggr(\frac{n_1+n_2}{n_1n_2}\biggr)=o(1).
\end{equation}
Therefore, we have a.a.s.
\begin{equation}\label{N}
(1-\varepsilon_1)n_1n_2p<N<(1+\varepsilon_1)n_1n_2p.
\end{equation}
For each pair of vertices $(a,b)\in X\oplus Y$, let $\rho(b,a)$ be the number of directed trails in $D$ from $b$ to $a$ of length $2i+1$, and let $U$ be the number of edges $\overrightarrow{uv}$ of $D$ such that the number of directed trails from $v$ to $u$ of length $2i+1$ is at most $(1-\delta)\Delta$ or at least $(1+\delta)\Delta$. We have
\begin{equation}
\begin{split}
\mathbb{E}(\rho(b,a))\geq&\ \binom{n_1-1}{i}\binom{n_2-1}{i}(i!)^2\bigl(\frac{p}{2}\bigl)^{2i+1},\text{ and}\\
\mathbb{E}(\rho(b,a))\leq&\ n_1^in_2^i(i!)^2\bigl(\frac{p}{2}\bigl)^{2i+1}.
\end{split}
\end{equation}
The conclusion is that $\mathbb{E}(\rho(b,a))=\Theta(n_1^in_2^ip^{2i+1})$, where the lower bound is obtained by counting $(a,b)$-paths and the upper bound is obtained by counting $(a,b)$-walks. For $\rho^2(b,a)$ we have
\begin{equation}
\begin{split}
\mathbb{E}(\rho^2(b,a))=\mathbb{E}^2(\rho(b,a))+O(n_1^{2i}n_2^{2i-1}p^{4i+1}+n_1^{2i-1}n_2^{2i}p^{4i+1}).
\end{split}
\end{equation}
Using Chebyshev's inequality, since $|\Delta-\mathbb{E}(\rho(b,a))|=o(\mathbb{E}(\rho(b,a)))$, for sufficiently large $n$,
\begin{equation}\label{good}
\begin{split}
\mathbb{P}\bigl(|\rho(b,a)-\Delta|\geq\delta\Delta\bigr)&\leq\mathbb{P}\Bigl(|\rho(b,a)-\mathbb{E}(\rho(b,a))|\geq\frac{\varepsilon_1}{2}\mathbb{E}(\rho(b,a))\Bigr)\\
&\leq\frac{\mathbb{E}(\rho^2(b,a))-\mathbb{E}^2(\rho(b,a))}{(\frac{\varepsilon_1}{2})^2\mathbb{E}^2(\rho(b,a))}=O\biggl(\frac{n_1+n_2}{n_1n_2p}\biggl)=o(1).
\end{split}
\end{equation}
Also for $U$ we have
\begin{equation}
\mathbb{E}(U)=pn_1n_2\mathbb{P}(|\rho(b,a)-\Delta|\geq\delta\Delta)\leq O(n_1+n_2).
\end{equation}
Hence by Markov's inequality,
\begin{equation}\label{condition1}
\mathbb{P}\Bigl(U\geq\varepsilon_1\frac{p}{2}n_1n_2\Bigr)\leq\frac{\mathbb{E}(U)}{\varepsilon_1\frac{p}{2}n_1n_2}=O\biggl(\frac{n_1+n_2}{n_1n_2p}\biggl)=o(1).
\end{equation}
This means, together with (\ref{N}) and (\ref{good}), a.a.s. at least $N-\varepsilon_1\frac{p}{2}n_1n_2>(1-\delta)N$ vertices of $V(\mathcal{H})$ satisfy $(1-\delta)\Delta\leq\mathrm{deg}(x)\leq(1+\delta)\Delta$, so condition (1) holds for $\mathcal{H}$.

To verify (2), let $e,f$ be two edges of $D$ that together belong to at least $\delta\Delta$ hyperedges in $\mathcal{H}$. This means that they are together in many closed trails of length $2i+2$. We may assume that the number of these closed trails is more than $L = (2i\,K(2i,8i+2))^2$, where $K(.,.)$ is the value from Lemma \ref{lem:path}.
Let us consider the subtrails of these closed trails from the endvertex of $e$ to the initial vertex of $f$ and the subtrails from the endvertex of $f$ to the initial vertex of $e$. One family of the subtrails, say the trails from the endvertex of $e$ to the initial vertex of $f$, contains at least $\sqrt{L} = 2iK(2i,2i+8)$ distinct trails, all of length at most $2i$. A subset of $K(2i,2i+8)$ of these trails has the same length $l\le 2i$.
By Lemma \ref{lem:path}, there exist vertices $u$ and $v$ (possibly $u=v$), and at least $8i+2$ internally disjoint directed (closed) paths from $u$ to $v$ of length $l$, where $2\leq l\leq 2i$.

Let $B$ be the number of vertex pairs $(u,v)\in V(D)^2$ such that there exist $8i+2$ internally disjoint directed paths from $u$ to $v$ of length $l$. Note that $p\ll n^{-\frac{2i-\varepsilon_1}{2i+1-\varepsilon_1}}<n^{-\frac{4i-1}{4i+1}}$ since $\varepsilon_1<1/2$. We have
\begin{equation}
\begin{split}
\mathbb{E}(B)&=O\Bigl(n_1^{(8i+2)\frac{l-1}{2}+1}n_2^{(8i+2)\frac{l-1}{2}+1}p^{(8i+2)l}\Bigr)\leq o(n^{4l-8i})=o(1), \text{ when } l\equiv1\ (\mathrm{mod}\, 2);\\
\mathbb{E}(B)&=O\Bigl(n_1^{(8i+2)\frac{l}{2}}n_2^{(8i+2)\frac{l-2}{2}+2}p^{(8i+2)l}\Bigr)\leq o(n_1^{2l}n_2^{2l-8i})=o(1), \text{ when } l\equiv0\ (\mathrm{mod}\, 2).
\end{split}
\end{equation}
By Markov's inequality, $\mathbb{P}(B\geq1)\leq o(1)$. This implies that for every $e,f\in A(D)$, at most $L$ closed trails in $D$ contain both $e$ and $f$, a.a.s. Therefore in our hypergraph $\mathcal{H}$, condition (2) holds for $\mathcal{H}$ when $n$ is large enough.

Finally, let us consider condition (3) of Theorem \ref{thm:hypergraphmatching}. Let $F$ be the number of closed trails of length $2i+2$ in $D$ which contain at least one directed edge $\overrightarrow{uv}\in P^\delta$, where $P^\delta$ is the set of pairs of vertices $(u,v)\in X\oplus Y$ such that the number of directed trails from $v$ to $u$ of length $2i+1$ is at least $(1+\delta)\Delta$. Each trail $R = x_1y_1x_2y_2\cdots y_{i+1}x_1$ contributing to $F$ is determined by two sequences of vertices $x_1,x_2,\dots,x_{i+1}\in X$ and $y_1,y_2,\dots,y_{i+1}\in Y$. Each such closed trail $R$ has the same probability that it forms a trail contributing to $F$. There are $2i+2$ candidates for an edge of $R$ being in $P^\delta$. This implies that
\begin{equation}\label{conditionalexp}
\mathbb{E}(F)\leq n_1^{i+1}n_2^{i+1}(2i+2)\mathbb{P}(R\subseteq A(D))\,\mathbb{P}(\overrightarrow{x_1y_1} \in P^\delta\mid R\subseteq A(D)).
\end{equation}

For $j=1,\dots,2i-1$, let $\alpha_j$ be the number of trails of length $2i+1$ from $y_1$ to $x_1$ that contain precisely $j$ edges in $R$. Then $\alpha = \sum_{j=1}^{2i-1} \alpha_j$ is the number of trails of length $2i+1$ from $y_1$ to $x_1$ different from $R$ which contain at least one edge in $R$. Since $n_2=\Theta(n_1)$, we have
\begin{equation}\label{alpha}
\begin{split}
\mathbb{E}(\alpha \mid R\subseteq A(D))
  &= \sum_{j=1}^{2i-1} \mathbb{E}(\alpha_j \mid R\subseteq A(D))\\[1mm]
  &\leq \sum_{j=1}^{2i-1} \binom{2i+1}{j}^2\,j!\, n_1^{2i-j} \bigr(\frac{p}{2}\bigr)^{2i+1-j}\\[1mm]
  &\leq O(n^{2i-1}p^{2i+1}).
\end{split}
\end{equation}
Now, by Markov's inequality,
\begin{equation}
\mathbb{P}(\alpha\geq1 \mid R\subseteq A(D))\leq O(n^{2i-1}p^{2i+1})\ll O(n^{-\frac{4i}{4i+1}})=o(1).
\end{equation}

In the next argument we will use the following events: $Q^\delta$ is the event that the number of trails of length $2i+1$ from $y_1$ to $x_1$ that are different from $R$ is at least $(1+\delta)\Delta-1$; $R_E$ is the event that all edges in $R$ appear in $D$, possibly with different orientations. There are $2^{2i+2}$ different orientations $\omega_1,\dots, \omega_{2^{2i+2}}$ of these edges. We denote by $R_E^j$ the event that these edges are present and have orientation $\omega_j$. Clearly, different events $R_E^j$ are mutually exclusive and $R_E$ is the union of all these events. Note that the following holds:
\begin{equation*}
\begin{split}
   \mathbb{P}(\overrightarrow{x_1y_1} \in P^\delta, \alpha=0 \mid R\subseteq A(D))
   &= \mathbb{P}(\overrightarrow{x_1y_1} \in Q^\delta, \alpha=0 \mid R\subseteq A(D))\\
   &\le \sum_{j=1}^{2^{2i+2}} \mathbb{P}(\overrightarrow{x_1y_1} \in Q^\delta, \alpha=0 \mid R_E^j) \\[1mm]
   &= 2^{2i+1}\, \mathbb{P}(\overrightarrow{x_1y_1} \in Q^\delta, \alpha=0 \mid R_E)\\[0.5mm]
   &\le 2^{2i+1}\, \mathbb{P}(\overrightarrow{x_1y_1} \in Q^\delta, \alpha=0)\\
   &\le 2^{2i+1}\, \mathbb{P}(\overrightarrow{x_1y_1} \in Q^\delta).
\end{split}
\end{equation*}
We used the fact that $\alpha=0$ is less likely to happen under the condition that $R_E$ holds and that $\overrightarrow{x_1y_1} \in Q^\delta$ is independent of $R_E$ when $\alpha=0$.

Combining the above inequalities with~(\ref{good}), we get
\begin{equation}\label{13}
\begin{split}
\mathbb{P}(\overrightarrow{x_1y_1} &\in P^\delta \mid R\subseteq A(D))\\
  &= \mathbb{P}(\overrightarrow{x_1y_1} \in P^\delta, \alpha\ge1 \mid R\subseteq A(D)) + \mathbb{P}(\overrightarrow{x_1y_1} \in P^\delta, \alpha=0 \mid R\subseteq A(D))\\
  &\leq o(1) + 2^{2i+1} \mathbb{P}(\overrightarrow{x_1y_1} \in Q^\delta) = o(1).
\end{split}
\end{equation}

Now, together with (\ref{conditionalexp}), $\mathbb{E}(F)\leq o(n_1^{i+1}n_2^{i+1}p^{2i+2})$, and by Markov's inequality,
\begin{equation}\label{c3}
\mathbb{P}(F\geq\delta N\Delta)\leq \frac{2^{2i+1}o(n^{2i+2}p^{2i+2})}{\delta(1-\varepsilon_1)n^{2i+2}p^{2i+2}} = o(1).
\end{equation}
This means condition (3) holds for $\mathcal{H}$ a.a.s.

We are now ready to apply Theorem \ref{thm:hypergraphmatching}. The theorem tells us that for sufficiently large $n$, there exists a matching $M$ of $\mathcal{H}$ of size at least $(1-\varepsilon_1)\frac{N}{2i+2}$. Therefore $M^{-1}=\{ H^{-1}\mid H\in M\}$ is a matching on $\mathcal{H}^{-1}$ defined on $D^{-1}$. Again, by Theorem \ref{thm:hypergraphmatching}, we have another matching $M^\prime$ in $\mathcal{H}^{-1}$ of size at least $(1-\varepsilon_1)\frac{N}{2i+2}$ such that $M^\prime\cap M^{-1}=\varnothing$. This implies that $M\cup M^\prime$ does not have non-simple blossoms of length $2$.

Next we will argue that there is only a small number of simple blossoms. Consider the digraph $D\cup D^{-1}$. Let $2\leq j\leq\frac{1}{\varepsilon_1}$ be an integer, and let $T(j)$ be the number of simple $(2i+2)$-blossoms of length $j$ in $D\cup D^{-1}$. We have
\begin{equation}\label{Tj}
\begin{split}
\mathbb{E}(T(j))&\leq n_1(n_1^{ij}n_2^{ij}p^{j+2ij})+n_2(n_1^{ij}n_2^{ij}p^{j+2ij})\\
&\leq 2n_1(n_1^{ij}n_2^{ij}p^{j+2ij})\leq 2\sqrt cn^{1+2ij}p^{j+2ij}\\
&< 2\sqrt{c}\,n^2p n^{\frac{1}{\varepsilon_1}(2i-\varepsilon_1)} p^{\frac{1}{\varepsilon_1}(2i+1-\varepsilon_1)}\\
&<2\sqrt{c}\,n^2 pn^{\frac{1}{\varepsilon_1}(2i-\varepsilon_1)} n^{-\frac{1}{\varepsilon_1}(2i-\varepsilon_1)} = O(n^2p).
\end{split}
\end{equation}
Hence by Markov's inequality,
\begin{equation}
\mathbb{P}\biggl(\,\sum_{j=2}^{1/\varepsilon_1}T(j)\geq\varepsilon_1 pn^2\biggl) \leq \mathbb{P}\bigl(T(j)\geq\varepsilon_1^2pn^2\bigl)\leq o(1).
\end{equation}
Therefore, a.a.s. the number of simple $(2i+2)$-blossoms of length at most $1/\varepsilon_1$ in $D\cup D^{-1}$ is at most $\varepsilon_1 pn_1n_2=\varepsilon_1 pn^2$. Since $M\cup M^{\prime}$ has size at least $2(1-\varepsilon_1)\frac{N}{2i+2}$, it has a subset $M_1$ without simple $(2i+2)$-blossom of length at most $1/\varepsilon_1$ after removing at most $\varepsilon_1pn^2$ closed trails. By using (3) we have:
\begin{equation}
\begin{split}
|M_1| &\geq 2(1-\varepsilon_1)\frac{N}{2i+2}-\varepsilon_1 pn^2\\
&\geq(1-\varepsilon_1)\frac{N}{i+1}-\frac{\varepsilon_1}{1-\varepsilon_1}N\\
&\geq\Big(1-\frac{i+2}{1-\varepsilon_1}\varepsilon_1\Big)\frac{N}{i+1}, \qquad\text{a.a.s}.
\end{split}
\end{equation}
Now we consider the $(2i+2)$-blossoms of length at least $1/\varepsilon_1$ in $M_1$. If $\mathcal{C}_1$ and $\mathcal{C}_2$ are two blossoms of $M_1$ with center $v$, by the way we constructed $M_1$ we could see that the tips of $\mathcal{C}_1$ and $\mathcal{C}_2$ cannot intersect. Therefore, if $v$ has $m$ neighbours in $D$, at most $\varepsilon_1 m$ different $(2i+2)$-blossoms of length at least $1/\varepsilon_1$ have center $v$. Thus, the total number of such blossoms is at most $\sum_{v\in V(D)}\deg_{G}(v)/(1/\varepsilon_1)=2\varepsilon_1N$. By removing one of the trails from each such blossom we get a blossom-free subset $M_0\subseteq M_1$ which satisfies a.a.s.
\begin{equation}\label{blossom-free}
\begin{split}
|M_0|&\geq|M_1|-2\varepsilon_1 N\\
&\geq\Big(1-\frac{3i+4}{1-\varepsilon_1}\varepsilon_1\Big)\frac{N}{i+1}=(1-\varepsilon_0)\frac{N}{i+1}.
\end{split}
\end{equation}
Finally, using $M_0$ we can obtain an $\varepsilon_0$-near $(2i+2)$-gon embedding of $G$ a.a.s. by using Lemma \ref{lem:blossom}. This completes the proof when $p\ll n^{-\frac{2i-\varepsilon_1}{2i+1-\varepsilon_1}}$.

For the case $p\geq \Theta\bigl(n^{-\frac{2i-\varepsilon_1}{2i+1-\varepsilon_1}}\bigr)$, we use a similar argument as used in \cite[Lemma 4.8]{genus}. Choose an integer $t=t(n)$, such that $n^{-\frac{2i}{2i+1}}\ll p/t\ll n^{-\frac{2i-\varepsilon_1}{2i+1-\varepsilon_1}}$. Let $p_1=p/t$. Now take a corresponding digraph $D$ of $\mathcal{G}_{n_1,n_2,p}$ and partition its edges into $t$ parts, putting each edge in one of the parts uniformly at random. Then each of the resulting digraphs $D_1,D_2,\dots, D_t$ is a corresponding digraph of $\mathcal{G}_{n_1,n_2,p_1}$. By the above, for every $1\leq j\leq t$, $D_j\cup D_j^{-1}$ has a collection of blossom-free directed $(2i+2)$-trails of size at least $(1-\varepsilon_0)\frac{|A(D_j)|}{i+1}$ a.a.s. That means, if we let $q$ be the probability that $D_{j}\cup D_{j}^{-1}$ does not have such a set of trails, then $q\to0$ as $n\to\infty$.

Let $I\subseteq\{1,2,\dots,t\}$ be the index set, containing all $j$, $1\leq j\leq t$, for which $D_{j}\cup D_{j}^{-1}$ does not have a collection of directed blossom-free $(2i+2)$-trails of size at least $(1-\varepsilon_0)\frac{|A(D_j)|}{i+1}$. Then by Markov's inequality, $\mathbb{P}(|I|\geq\sqrt qt)\leq \sqrt q$. Hence for sufficiently large $n$, $|I|\leq\varepsilon_0t/2$ a.a.s.

Similarly as in the proof of (\ref{N}), we see that for each $1\leq j\leq t$ a.a.s.
\begin{equation}\label{eq:19}
(1-\varepsilon_0)n^2p_1\leq|A(D_j)|\leq(1+\varepsilon_0)n^2p_1.
\end{equation}
Let $q'$ be the probability that (\ref{eq:19}) does not hold for $|A(D_j)|$. Then $q'\to0$ as $n\to\infty$. Let $I'\subseteq\{1,2,\dots,t\}$ such that for every $j\in I'$, $|A(D_j)|$ does not satisfy (\ref{eq:19}). Thus again by Markov's inequality, $\mathbb{P}(|I'|\geq\sqrt{q'}t)\leq\sqrt{q'}$. Therefore $|I'|\leq\varepsilon_0t/2$ a.a.s.~when $n$ is large enough.

Now let $\Gamma$ be the union of collections of directed blossom-free $(2i+2)$-trails of size at least $(1-\varepsilon_0)\frac{|A(D_j)|}{i+1}$ for $j\notin I,I'$. We have:
\begin{equation}
\begin{split}
|\Gamma|&\geq(1-\varepsilon_0)\sum_{j\notin I,I'}\frac{|A(D_j)|}{i+1}\geq(1-\varepsilon_0)^2t(1-\varepsilon_0)\frac{p_1n^2}{i+1}\\
&\geq(1-\varepsilon_0)^3\frac{pn^2}{i+1}\\
&\geq(1-\varepsilon)(1+\varepsilon_0)\frac{pn^2}{i+1}\geq(1-\varepsilon)\frac{|A(D)|}{i+1}.
\end{split}
\end{equation}
Since the directed closed trails of $\Gamma$ that belong to any $D_j$ $(j\notin I)$ are blossom-free and any $D_k$ and $D_j$ are edge disjoint for $k\neq j$, $\Gamma$ is blossom-free. By Lemma \ref{lem:blossom}, we get a rotation system $\Pi$ in which every closed trail in $\Gamma$ is a face of $\Pi$. Let $f_{2i+2}$ be the number of faces of length $2i+2$ of $\Pi$. We have $(2i+2)f_{2i+2}\geq2(1-\varepsilon)|E(G)|$, thus $\Pi$ is an $\varepsilon$-near $(2i+2)$-gon embedding a.a.s.
\end{proof}

The result of Lemma \ref{lem:biembed} has been proved under the assumption that $n_2=\Theta(n_1)$. However, that assumption can be omitted as long as $n_2\gg 1$.

\begin{lemma}\label{lem:general}%
Let $\varepsilon>0$ and $G\in\mathcal{G}_{n_1,n_2,p}$ be a random bipartite graph on vertex set $X\sqcup Y$ with $|X|=n_1\geq n_2=|Y|$. If $p\gg n_2^{-\frac{2i}{2i+1}}$ where $i$ is a fixed positive integer and $n_2\gg1$, then a.a.s. (as $n_2\to\infty$) $G$ has an $\varepsilon$-near $(2i+2)$-gon embedding.
\end{lemma}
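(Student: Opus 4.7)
The plan is to reduce to the balanced case (Lemma \ref{lem:biembed}) by slicing $X$ into pieces of size comparable to $|Y|$ and then stitching together the resulting rotation systems along $Y$. First I would fix $\varepsilon' = \varepsilon/3$ and partition $X$ equitably into $k = \lceil n_1/n_2 \rceil$ parts $X_1, \dots, X_k$, so that each $|X_j| \in [n_2/2, n_2]$. The induced bipartite graphs $G_j = G[X_j \sqcup Y]$ are edge-disjoint and mutually independent, each distributed as a random bipartite graph in $\mathcal{G}_{|X_j|, n_2, p}$. Since $p \gg n_2^{-2i/(2i+1)} = \Theta\bigl((|X_j|\,n_2)^{-i/(2i+1)}\bigr)$ and both $|X_j|/n_2$ and $n_2/|X_j|$ are at most $2$, Lemma \ref{lem:biembed} (applied to $G_j$ with the roles of the two sides swapped if needed) yields an $\varepsilon'$-near $(2i+2)$-gon embedding $\Pi_j$ of $G_j$ with probability $1-q$, where $q=q(n_2)\to 0$. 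By the independence of the $G_j$'s and Markov's inequality, the random index set $I \subseteq \{1,\dots,k\}$ on which $\Pi_j$ fails to exist satisfies $|I| \leq \sqrt{q}\,k$ a.a.s.; for $j \in I$ I take $\Pi_j$ to be any rotation system of $G_j$.

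Next I would glue the $\Pi_j$'s into a single rotation system $\Pi$ on $G$: keep the rotation at each $x \in X_j$ from $\Pi_j$, and at each $y \in Y$ concatenate the local cyclic rotations $\pi_y^{(1)}, \dots, \pi_y^{(k)}$ (in any fixed order) into one cyclic rotation. A face of $\Pi_j$ that never passes through a \emph{seam corner}---a corner $(e,y,e')$ at some $y$ whose two edges belong to distinct subgraphs---is preserved as a face of $\Pi$. Each $y \in Y$ contributes at most $k$ seam corners in $\Pi$, and each seam destroys at most two former faces, so the total number of faces of $\bigcup_j \Pi_j$ destroyed in passing to $\Pi$ is $O(kn_2) = O(n_1)$, costing at most $O(n_1)$ edge-face incidences in $(2i+2)$-faces.

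The final step is a counting check. The $\Pi_j$ with $j \notin I$ contribute at least $\sum_{j \notin I} 2(1-\varepsilon')|E(G_j)|$ edge-face incidences in $(2i+2)$-faces of $\Pi$ before seam losses. Edges of $G_j$ with $j \in I$ number at most $O(\sqrt{q}\,pn_1n_2)$ by standard concentration of $|E(G_j)|$, and $|E(G)| \sim pn_1n_2$ while $pn_2 \gg n_2^{1/(2i+1)} \to \infty$ implies $n_1 = o(|E(G)|)$; so the $I$-contribution and the $O(n_1)$ seam loss are both $o(|E(G)|)$. Choosing $n_2$ large enough that $\sqrt{q}$ and the $o(1)$ terms are each $< \varepsilon/3$ yields $(2i+2)f_{2i+2}(\Pi) \geq 2(1-\varepsilon)|E(G)|$, as required. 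The main obstacle is the bookkeeping in the gluing step---verifying that the seams plus the exceptional set $I$ together absorb only an $o(1)$-fraction of $|E(G)|$---and this is exactly where the quantitative hypothesis $p \gg n_2^{-2i/(2i+1)}$ is used.
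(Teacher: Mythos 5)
Your decomposition is exactly the paper's: slice $X$ equitably into $\Theta(n_1/n_2)$ parts of size comparable to $n_2$, apply Lemma \ref{lem:biembed} to each $G_j=G[X_j\sqcup Y]$, and control the exceptional index set $I$ by Markov's inequality. Where you genuinely diverge is in the assembly step. The paper does not glue rotation systems; it extracts from the proof of Lemma \ref{lem:biembed} a blossom-free collection $M_j$ of closed $(2i+2)$-trails in each $D_j\cup D_j^{-1}$, observes that the union $\bigcup_{j\notin I}M_j$ is still blossom-free because the $D_j$ are arc-disjoint (so any blossom centered at a vertex of $Y$ would force consecutive cycles to share a tip in some $X_j$, hence to lie in the same $D_j$), and then applies Lemma \ref{lem:blossom} once at the end. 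You instead use only the \emph{statement} of Lemma \ref{lem:biembed}, take the rotation systems $\Pi_j$ it outputs, and concatenate the local rotations at each $y\in Y$; your seam accounting is correct (each concatenation at $y$ replaces exactly one corner of each $\pi_y^{(j)}$, each replaced corner kills at most one face of $\Pi_j$, so at most $O(kn_2)=O(n_1)$ faces are lost, which is $o(|E(G)|)$ since $pn_2\gg n_2^{1/(2i+1)}\to\infty$). Your route is more modular --- it treats Lemma \ref{lem:biembed} as a black box rather than reopening its proof to get blossom-freeness --- at the cost of the explicit corner/seam bookkeeping; the paper's route keeps everything in the combinatorial language of trail collections and defers all topology to a single invocation of Lemma \ref{lem:blossom}. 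Both arguments are sound.
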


\begin{proof}
It is sufficient to consider the case $n_1/n_2\gg1$. Let $t=\lfloor \frac{n_1}{n_2}\rfloor$, and let $\mathcal{P}=\{X_j\}_{j\in J}$ be the equitable partition of $X$ into $t$ parts, where $J=[t]$. Note that $|X_j|=N_j$ is between $n_2$ and $2n_2$, for every $j\in J$. Let $G_j$ be the bipartite graph $G[X_j\sqcup Y]$ and let $D_j$ be its corresponding digraph. Choose $\varepsilon_0>0$ such that $\varepsilon>\frac{4\varepsilon_0}{1+\varepsilon_0}$. By the proof of Lemma \ref{lem:biembed} there exists a set $M_j$ of closed trails of length $2i+2$ in $D_j\cup D_j^{-1}$, such that $|M_j|\geq(1-\varepsilon_0)\frac{|A(D_j)|}{i+1}$ and $M_j$ is blossom-free, for each $j\in J$ a.a.s. That means, if we let $q_j$ be the probability that $D_j\cup D_j^{-1}$ does not have such a set of closed trails, we have $q_j\to 0$ when $n_2\to \infty$. The probabilities $q_j$ are almost the same since $|X_j|$ only takes at most two different values. We let $q=\max\{q_j\mid j\in J\}$. Define the index set $I\subseteq J$ containing those $j\in J$, for which $D_j\cup D_j^{-1}$ does not have a set of closed trails satisfying the conditions stated above. By Markov's inequality, we have $\mathbb{P}(|I|\geq\sqrt qt)\leq\sqrt q$. Then, when $n_2$ is large enough, $|I|\leq\varepsilon_0t/2$. 

Let $I'$ be the index set such that for every $j\in I'$ either $|A(D_j)|>(1+\varepsilon_0)N_jn_2p$ or $|A(D_j)|<(1-\varepsilon_0)N_jn_2p$. Thus by Markov's inequality we may assume $|I'|<\varepsilon_0t/2$ a.a.s.~when $n_2$ is large enough.

Similarly as in the proof of (\ref{N}) we have a.a.s.
\begin{equation}
\begin{split}
(1-\varepsilon_0)N_jn_2p&\leq |A(D_j)|\leq(1+\varepsilon_0)N_jn_2p,\ \ \forall j\in J\setminus I',\\
(1-\varepsilon_0)n_1n_2p&\leq |E(G)|\leq(1+\varepsilon_0)n_1n_2p.
\end{split}
\end{equation}
Let $M=\bigcup_{j\in J\setminus (I\cup I')}M_j$. Since each $M_j$ $(j\in J\setminus (I\cup I'))$ is blossom-free and the edge-sets of different $D_j$ are disjoint, $M$ is also blossom-free. We also have:
\begin{equation}
\begin{split}
|M|&=\sum_{j\in J\setminus (I\cup I')}|M_j|\geq t(1-\varepsilon_0)(1-\varepsilon_0)\frac{|A(D_j)|}{i+1}\\
&\geq t(1-\varepsilon_0)^3\frac{N_jn_2p}{i+1}\geq(1-\varepsilon_0)^4\frac{n_1n_2p}{i+1}\\
&\geq(1+\varepsilon_0)(1-\varepsilon)\frac{n_1n_2p}{i+1}\geq(1-\varepsilon)\frac{|E(G)|}{i+1}.
\end{split}
\end{equation}
Therefore, by Lemma \ref{lem:blossom} we get the desired $\varepsilon$-near $(2i+2)$-gon embedding $\Pi$ a.a.s.
\end{proof}

We are ready to complete the proof of our first main result. For reader's reference, Theorem~\ref{thm:1.3} is restated here.
\begin{thm:associativity}
Let $\varepsilon>0$ and $G\in\mathcal{G}_{n_1,n_2,p}$ be a random bipartite graph and suppose that $i\geq2$ is an integer. If $p$ satisfies $(n_1n_2)^{-\frac{i}{2i+1}}\ll p\ll (n_1n_2)^{-\frac{i-1}{2i-1}}$, $n_1/n_2<c$ and $n_2/n_1<c$ where $c$ is a positive real number, then we have a.a.s.
\begin{equation*}
(1-\varepsilon)\frac{i}{2i+2}pn_1n_2\leq g(G)\leq(1+\varepsilon)\frac{i}{2i+2}pn_1n_2
\end{equation*}
and
\begin{equation*}
(1-\varepsilon)\frac{i}{i+1}pn_1n_2\leq \widetilde{g}(G)\leq(1+\varepsilon)\frac{i}{i+1}pn_1n_2.
\end{equation*}
\end{thm:associativity}
\begin{proof}
To prove the lower bound, we count the number of closed trails of $G$ of length at most $2i$. Let $C$ be the number of such closed trails. We have
\begin{equation}
\mathbb{E}(C)\leq\sum_{j=2}^{i}n_1^jn_2^jp^{2j}=o(n_1n_2p).
\end{equation}
Then by Markov's inequality, a.a.s. at most $\frac{1}{4(i-1)}\varepsilon pn_1n_2$ closed trails of $G$ have length at most $2i$. Similarly as in the proof of (\ref{N}) we get $|E(G)|\geq(1-\frac{1}{2i}\varepsilon)pn_1n_2$, a.a.s. Let $\Pi$ be a rotation system of $G$, and let $f(\Pi)$ be the number of faces, and $f^\prime$ be the number of faces of $\Pi$ with length at most $2i$. Then $2|E(G)|\geq(2i+2)(f(\Pi)-f^\prime)+4f^\prime\geq(2i+2)f(\Pi)-(2i-2)f^\prime$. By the above, $f^\prime\leq 2C\leq\frac{1}{2(2i-2)}\varepsilon pn_1n_2$ a.a.s. Now we have a.a.s.
\begin{equation}\label{24}
\begin{split}
g(G,\Pi)&=\frac{1}{2}(|E(G)|-f(\Pi)-|V(G)|)+1\sim\frac{1}{2}(|E(G)|-f(\Pi))\\
&\geq\frac{i}{2i+2}|E(G)|-\frac{i-1}{2i+2}f^\prime\\
&\geq\Bigl(1-\frac{1}{2i}\varepsilon\Bigr) \frac{i}{2i+2}pn_1n_2-\frac{i-1}{2i+2}\frac{1}{4(i-1)}\varepsilon pn_1n_2\\
&\geq(1-\varepsilon)\frac{i}{2i+2}pn_1n_2.
\end{split}
\end{equation}
For the upper bound, by Lemma \ref{lem:biembed} we have an $\varepsilon^\prime$-near $(2i+2)$-gon embedding $\Pi$ a.a.s., with $\varepsilon^\prime=\frac{i\varepsilon}{2+\varepsilon}$, and let $f(\Pi)$ be the number of faces. Also, we have a.a.s. $|E(G)|\leq(1+\frac{1}{2}\varepsilon)pn_1n_2$. Therefore, a.a.s.
\begin{equation}\label{25}
\begin{split}
g(G,\Pi)&=\frac{1}{2}(|E(G)|-f(\Pi)-|V(G)|)+1\sim\frac{1}{2}(|E(G)|-f(\Pi))\\
&\leq\frac{1}{2}(|E(G)|-\frac{2(1-\varepsilon^\prime)}{2i+2}|E(G)|)\\
&\leq\Bigl(1+\frac{1}{2}\varepsilon\Bigr)\frac{i+\varepsilon^\prime}{2i+2}pn_1n_2 = (1+\varepsilon)\frac{i}{2i+2}pn_1n_2.
\end{split}
\end{equation}
This completes the proof for the orientable genus. The proof for $\widetilde{g}(G)$ is essentially the same, where the lower bound uses Euler's Formula as in (\ref{24}), while for the upper bound we just observe that $\widetilde{g}(G)\leq2g(G)+1$, see \cite{top}.
\end{proof}

\begin{thm:associativity2}
Let $\varepsilon>0$ and $G\in\mathcal{G}_{n_1,n_2,p}$ be a random bipartite graph. If $n_1\geq n_2\gg1$ and $p\gg n_2^{-\frac{2}{3}}$, then we have a.a.s.
\begin{equation*}
(1-\varepsilon)\frac{pn_1n_2}{4}\leq g(G)\leq(1+\varepsilon)\frac{pn_1n_2}{4}
\end{equation*}
and
\begin{equation*}
(1-\varepsilon)\frac{pn_1n_2}{2}\leq \widetilde{g}(G)\leq(1+\varepsilon)\frac{pn_1n_2}{2}.
\end{equation*}
\end{thm:associativity2}
\begin{proof}
The lower bound follows from \cite[Proposition 4.4.4]{top}. For the upper bound, we have the same proof as for (\ref{25}), except that we use Lemma \ref{lem:general} (with $i=1$) instead of Lemma \ref{lem:biembed}.
\end{proof}

\section{Random bipartite graphs with a small part}

Now we consider the case when $G\in\mathcal{G}_{n_1,n_2,p}$ where $n_1\gg 1$ and $n_2$ is a constant. We say $S$ is a {\em standard graph} of $\mathcal{G}_{n_1,n_2,p}$ if $S$ is a bipartite graph on the vertex set $V(S)=X\sqcup Y$ with $|X|\sim n_1$ and $|Y|=n_2$, and we have expected degree distributions for $\mathcal{G}_{n_1,n_2,p}$. This means, for every $Y^\prime\subseteq Y$ with $|Y^\prime|=m$, $|\{x\in X\mid N(x)=Y^\prime\}|=\lfloor p^m(1-p)^{n_2-m}n_1\rfloor$, where $N(x)$ is the set of neighbours of $x$. Note that the standard graph is essentially unique. Suppose that $c$ is some constant. Then we say that an embedding $\Pi$ of $G$ is a {\em near $k$-gon embedding} (\emph{with respect to} $c$) if $2|E(G)|-kf_k(\Pi)\leq c$.

\begin{lemma}\label{lem:standard}
Let $S$ be the standard graph of $\mathcal{G}_{n_1,n_2,p}$ where $n_1\gg 1$ and $n_2$ is a constant. Suppose that $p\gg n_1^{-\frac{1}{2}}$ and let $S^\prime$ be the bipartite graph obtained by removing all vertices of degree at most one in $S$. Then $S^\prime$ has a near $4$-gon embedding with respect to the constant $c=(4n_2+14)2^{n_2}$.
\end{lemma}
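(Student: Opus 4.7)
The plan is to construct a near $4$-gon embedding of $S'$ by embedding each complete-bipartite piece $K_{|X_{Y'}|,|Y'|}$ as a (near-)quadrangulation and then gluing these local embeddings along the shared vertices of $Y$. Writing $V(S')=X'\sqcup Y$ with $X'=\bigsqcup_{Y'\subseteq Y,\,|Y'|\ge 2}X_{Y'}$, the edges of $S'$ partition into $\bigsqcup_{Y'} E(H_{Y'})$, where $H_{Y'}:=S'[X_{Y'}\cup Y']\cong K_{s_{Y'},|Y'|}$ with $s_{Y'}:=|X_{Y'}|=\lfloor p^{|Y'|}(1-p)^{n_2-|Y'|}n_1\rfloor$. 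For each $Y'$ with $s_{Y'}\ge 2$, the classical theorem of Ringel (and others) on quadrangular genus embeddings of complete bipartite graphs provides a quadrangulation $\Pi_{Y'}$ of $K_{s_{Y'},|Y'|}$ when $s_{Y'}\cdot|Y'|$ is even, and a near-quadrangulation with exactly one hexagonal face when $s_{Y'}$ and $|Y'|$ are both odd. For the sporadic $Y'$ with $s_{Y'}\le 1$ and $|Y'|\ge 3$ (possible only when $p^{|Y'|}n_1=O(1)$), I would attach the at-most-one vertex of degree $|Y'|$ directly, at a local cost of at most $2|Y'|\le 2n_2$ to the defect.

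Next, combine these local embeddings into a global rotation system $\Pi$ on $S'$ by taking, at each $x\in X_{Y'}$, the rotation from $\Pi_{Y'}$, and at each $y\in Y$, the cyclic concatenation (in some fixed linear order) of the rotations of $y$ in all $\Pi_{Y'}$ with $y\in Y',\,|Y'|\ge 2$. The assumption $p\gg n_1^{-1/2}$ makes $|X_{\{y,y'\}}|\to\infty$ for every pair $\{y,y'\}\subseteq Y$, so the pair-pieces already connect all of $Y$ and the combined embedding lies on a single connected surface. Writing $t_y:=2^{n_2-1}-1$ for the number of $Y'\ni y$ with $|Y'|\ge 2$, I would choose in each $\Pi_{Y'}$ the wrap-around angle at each $y\in Y'$ to lie on a $4$-face (always possible since $s_{Y'}\ge 2$ affords at least two angles at $y$ from which to choose, thus avoiding the optional hexagonal face). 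Tracing the face permutation $\phi=\pi\circ\iota$ through the concatenated rotation then shows that the $t_y$ originally disjoint $4$-faces containing the wrap-around angles at $y$ combine into exactly one face of length $4t_y$. Moreover, each $4$-face of $\Pi_{Y'}$ is a wrap-around face at precisely one $y\in Y'$ (a system of distinct representatives, which exists when $s_{Y'}\ge 2$), so the merges at different $y$'s act on disjoint collections of $4$-faces and their defect contributions add without interference.

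Summing, the merge contribution to $D(\Pi)=2|E(S')|-4f_4(\Pi)$ is at most $\sum_{y\in Y}4t_y=4n_2(2^{n_2-1}-1)\le 2n_2\cdot 2^{n_2}$. Adding at most one hexagonal face per near-quadrangulated $Y'$ (each contributing $6$ to $D$, total at most $6\cdot 2^{n_2}$) and the singleton contributions (total at most $2n_2\cdot 2^{n_2}$), we get
\[
D(\Pi)\;\le\;2n_2\cdot 2^{n_2}+6\cdot 2^{n_2}+2n_2\cdot 2^{n_2}\;\le\;(4n_2+14)\cdot 2^{n_2}=c.
\]
The main technical hurdle is the face-tracing argument confirming that the cyclic concatenation at $y$ actually combines the $t_y$ wrap-around $4$-faces into a single face of length $4t_y$ (rather than into multiple possibly longer faces), which relies on the careful choice of wrap-around angles and on the observation that different $y$'s draw from disjoint collections of $4$-faces. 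A secondary technical point is the uniform accounting of the sporadic singleton subsets $Y'$ so that they fit within $c$; this is straightforward since there are at most $2^{n_2}$ of them.
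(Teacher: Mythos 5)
Your decomposition of $S'$ into the complete bipartite pieces $S[Y'\sqcup F_S(Y')]$ and your appeal to Ringel's genus embeddings of $K_{m,n}$ match the paper exactly, but your gluing mechanism is genuinely different. The paper collects the quadrilateral faces of the pieces as a family of closed trails, deletes at most $|Y'|$ trails per piece to destroy all blossoms centred in $Y'$, and then invokes (an extension of) Lemma~\ref{lem:blossom} to produce a rotation system in which every surviving $4$-trail is a face; the constant $c$ then falls out as (at most $4n_2$ lost per piece from deleted trails) plus (at most $14$ per piece from the non-quadrilateral faces of the Ringel embeddings). You instead perform an explicit vertex amalgamation: concatenate the local rotations at each $y\in Y$ and trace the face permutation to see that the $t_y$ wrap-around faces at $y$ coalesce into a single long face. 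This is more elementary and self-contained --- it avoids the ``obvious extension'' of the blossom lemma entirely --- at the price of the bookkeeping you correctly identify as the technical hurdle. That bookkeeping can in fact be simplified: you do not need the wrap-around angles to lie on $4$-faces, nor the system of distinct representatives, because the total length of the faces destroyed by the merges equals the total length of the faces created, so the merge contribution to $2|E(S')|-4f_4(\Pi)$ is at most $4\sum_y t_y$ no matter how the wrap-around faces overlap or what their lengths are. Two small corrections to your constants: a minimum-genus embedding of $K_{m,n}$ need not have only one hexagonal defect face --- in the worst case ($ (m-2)(n-2)\equiv 1 \pmod 4$) the non-quadrilateral faces have total length $14$ (e.g.\ a $6$-gon plus an $8$-gon), which is exactly where the paper's $14$ comes from; and your three error terms should be charged to disjoint families of subsets $Y'$ rather than summed over all of them. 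With these adjustments your bound is $(4n_2+14)2^{n_2}=c$, so the argument closes.
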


\begin{proof}
Let $V(S)=X(S)\sqcup Y(S)$. Note that $n_1-2^{n_2}\leq |X(S)|\leq n_1$ and $|Y(S)|=n_2$. For every $Y^\prime\subseteq Y(S)$, let $F_S(Y^\prime)=\{x\in X(S)\mid N(x)=Y^\prime\}$. Now consider all of the $2^{n_2}$ subsets of $Y(S)$, they give us a partition of $X(S)=\bigsqcup_{Y^\prime\subseteq Y(S)} F_S(Y^\prime)$. Note that $S[Y^\prime\sqcup F_S(Y^\prime)]$ is a complete bipartite graph for every $Y^\prime\subseteq Y(S)$. If $|Y^\prime|\geq2$, by \cite{Ringel}, we have a near $4$-gon embedding of $S[Y^\prime\sqcup F_S(Y^\prime)]$. Moreover, there is always a near $4$-gon embedding with respect to the constant $14$ since in the worst case, we may have one $6$-gon and one $8$-gon apart from the $4$-gons. Let $\mathcal{C}(Y^\prime)$ be the set of all facial walks of length $4$ in the optimal embedding of $S[Y^\prime\sqcup F_S(Y^\prime)]$. We can remove from $\mathcal{C}(Y^\prime)$ a collection of at most $|Y^\prime|$ closed trails to make $\mathcal{C}(Y^\prime)$ free of blossoms with center in $Y^\prime$. Therefore, we can remove at most $2^{n_2}n_2$ closed trails of length $4$ to make $\bigcup_{Y^\prime\in Y(S), |Y^\prime|\geq2}\mathcal{C}(Y^\prime)$ free of blossoms centered in $Y$. Now we apply Lemma \ref{lem:blossom} for the union of these sets for all $Y^\prime$ with $|Y^\prime|\geq2$. This shows that there is a near $4$-gon embedding of $S^\prime$ with respect to the constant $c=(4n_2+14)2^{n_2}$.
\end{proof}

\begin{lemma}\label{lem:g(S)}
Let $S$ be the standard graph of $\mathcal{G}_{n_1,n_2,p}$ where $n_1\gg 1$ and $n_2$ is a constant. Suppose that $p\gg n_1^{-\frac{1}{3}}$, then
\begin{equation*}
g(S)\sim\frac{n_1n_2p}{4}\sum_{i=2}^{n_2-1}\frac{i-1}{i+1} \binom{n_2-1}{i}(-p)^i.
\end{equation*}
In particular, when $n_1^{-\frac{1}{3}}\ll p\ll1$, $g(S)=(1+o(1))\frac{n_1p^3}{4}\binom{n_2}{3}$.
\end{lemma}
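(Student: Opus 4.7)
The plan is to evaluate $g(S)$ asymptotically by combining the near $4$-gon embedding of Lemma~\ref{lem:standard} with a matching lower bound from the bipartite girth, and then carrying out an explicit binomial simplification.

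First I would reduce to $S'$. Isolated vertices do not affect the orientable genus (by additivity of genus under disjoint union), and a degree-one vertex may always be inserted into a face adjacent to its neighbour in any embedding of the remaining graph; hence $g(S)=g(S')$. Writing $N_m=\lfloor p^m(1-p)^{n_2-m}n_1\rfloor$ and counting directly, one obtains
\begin{align*}
|E(S')| &= n_1n_2p\bigl(1-(1-p)^{n_2-1}\bigr)+O(1),\\
|V(S')| &= n_1\bigl(1-(1-p)^{n_2}-n_2p(1-p)^{n_2-1}\bigr)+O(1).
\end{align*}

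Next I would tightly estimate the face count. Lemma~\ref{lem:standard} furnishes an embedding $\Pi$ of $S'$ with $2|E(S')|-4f_4(\Pi)\leq c$ for a constant $c=c(n_2)$. Since $S'$ is bipartite, every face of every embedding has length at least $4$, so $f\leq|E(S')|/2$ holds universally. These two inequalities squeeze $f(\Pi)=|E(S')|/2-O(1)$ for our specific $\Pi$, while $f\leq|E(S')|/2$ continues to hold for all competing embeddings. Plugging into Euler's formula $g=1+\tfrac12(|E|-|V|-f)$ yields matching upper and lower bounds that agree up to an additive constant:
\[
g(S)=\frac{|E(S')|}{4}-\frac{|V(S')|}{2}+O(1).
\]

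The remaining work is algebraic. Expanding $(1-p)^{n_2-1}$ and $(1-p)^{n_2}$ via the binomial theorem and using the identities $n_2\binom{n_2-1}{i-1}=i\binom{n_2}{i}$ together with $n_2p\binom{n_2-1}{i}(-p)^i=-(i+1)\binom{n_2}{i+1}(-p)^{i+1}$, the two quantities $|E(S')|/4$ and $|V(S')|/2$ reduce to sums in $(-p)^i$ whose coefficients telescope into
\[
\frac{|E(S')|}{4}-\frac{|V(S')|}{2}=-\frac{n_1}{4}\sum_{i=2}^{n_2}(i-2)\binom{n_2}{i}(-p)^i+O(1),
\]
which after one index shift coincides with $\frac{n_1n_2p}{4}\sum_{i=2}^{n_2-1}\frac{i-1}{i+1}\binom{n_2-1}{i}(-p)^i$. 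The hypothesis $p\gg n_1^{-1/3}$ ensures the main term is $\omega(1)$, so the $O(1)$ correction is absorbed into the $\sim$. For the ``in particular'' statement, the $i=3$ term in the first sum (equivalently $i=2$ in the target indexing) dominates once $p\ll 1$, contributing $\frac{n_1p^3}{4}\binom{n_2}{3}$; the remaining terms are of order $n_1p^k$ with $k\geq 4$ and hence $o(n_1p^3)$.

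The principal obstacle I anticipate is the algebraic bookkeeping in the last step: one must line up the coefficients of the two binomial sums so that they collapse precisely to $(2-i)\binom{n_2}{i}$, and then reindex correctly to recover the target form. A minor conceptual subtlety is justifying $g(S)=g(S')$ for the orientable genus rather than only for $2$-cell embedding genus, handled by a standard appeal to Youngs' theorem together with additivity under disjoint union.
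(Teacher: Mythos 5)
Your proposal is correct and follows essentially the same route as the paper: reduce to $S'$, apply Lemma~\ref{lem:standard} together with Euler's formula, use the bipartite face-length bound $f\leq|E(S')|/2$ for optimality, and simplify the binomial sums (your intermediate form $-\frac{n_1}{4}\sum_{i=2}^{n_2}(i-2)\binom{n_2}{i}(-p)^i$ checks out and reindexes to the stated expression). If anything, you are slightly more explicit than the paper about the matching lower bound and about why $g(S)=g(S')$.
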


\begin{proof}
Let $\Pi$ be the rotation system of $S^\prime$ given by Lemma \ref{lem:standard}. Since this gives a near 4-gon embedding, we have the following (Note that $|X(S')|\sim n_1$ by the definition of the standard graph, we will  use $n_1$ in the calculation):
\begin{equation}
\begin{split}
g(S^\prime)&\sim\frac{1}{2}(2+|E(S^\prime)|-f(\Pi)-|V(S^\prime)|)\\
&\sim\frac{1}{2}(|E(S^\prime)|-f(\Pi)-n_1+(1-p)^{n_2}n_1+(1-p)^{n_2-1}n_1n_2p)\\
&\sim\frac{1}{2}\biggl(\frac{|E(S^\prime)|}{2}-n_1+(1-p)^{n_2}n_1+(1-p)^{n_2-1}n_1n_2p\biggr)\\
&\sim\frac{1}{2}\biggl(\frac{n_1n_2p-(1-p)^{n_2-1}n_1n_2p}{2}-n_1+(1-p)^{n_2}n_1+(1-p)^{n_2-1}n_1n_2p\biggr)\\
&=\frac{1}{2}\biggl(\frac{1}{2}n_1n_2p\sum_{i=1}^{n_2-1}\binom{n_2-1}{i}(-p)^i+n_1\sum_{i=2}^{n_2}\binom{n_2}{i}(-p)^i\biggr)\\
&=\frac{n_1n_2p}{4}\sum_{i=2}^{n_2-1}\binom{n_2-1}{i}(-p)^i\frac{i-1}{i+1}.
\end{split}
\end{equation}
Since $g(S)=g(S^\prime)$, this completes the proof.
\end{proof}

\begin{lemma}\label{lem:4.3}
Let $\varepsilon>0$ and $G\in\mathcal{G}_{n_1,n_2,p}$ where $n_1\gg 1$ and $n_2$ is a constant. If $p\gg n_1^{-\frac{1}{3}}$ and $S$ is the standard graph of $\mathcal{G}_{n_1,n_2,p}$, we have a.a.s. (as $n_1\to\infty$)
\begin{equation*}
(1-\varepsilon)g(S)\leq g(G)\leq(1+\varepsilon)g(S).
\end{equation*}
\end{lemma}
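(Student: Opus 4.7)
The plan is to leverage the structural decomposition underlying Lemma~\ref{lem:standard}: for every $Y'\subseteq Y$ the set $F_G(Y')=\{x\in X:N_G(x)=Y'\}$ together with $Y'$ induces the complete bipartite graph $K_{a_{Y'}(G),|Y'|}$ in $G$, exactly as in $S$. Pendant and isolated $X$-vertices do not affect the genus, so $g(G)=g(G')$ where $G'$ is $G$ with all $X$-vertices of degree at most $1$ removed (a.a.s.\ no vertex in $Y$ has degree below $2$, since the expected degree $n_1p\to\infty$). My goal is to prove the two-sided bound $g(G')=|E(G')|/4-|V(G')|/2+O(1)$ and then show that this quantity is concentrated around $g(S)$.

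For the upper bound, I would apply the construction of Lemma~\ref{lem:standard} verbatim to $G$: embed each $K_{a_{Y'}(G),|Y'|}$ with $|Y'|\ge 2$ as a near-$4$-gon embedding via Ringel's theorem, delete at most $|Y'|$ face trails from each $\mathcal{C}(Y')$ to kill blossoms centred at $Y'$, and set $\mathcal{C}=\bigcup_{|Y'|\ge 2}\mathcal{C}(Y')$. Since the sets $F_G(Y')$ are pairwise disjoint, blossoms cannot arise across different $Y'$ (the two tips meeting at a hypothetical $Y$-centre would have to lie in distinct $F_G$'s), so $\mathcal{C}$ is blossom-free and Lemma~\ref{lem:blossom} yields a rotation system $\Pi$ of $G'$ realising every trail of $\mathcal{C}$ as a face. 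As $|\mathcal{C}|=|E(G')|/2-O(1)$, only $O(1)$ arcs remain uncovered, contributing at most $O(1)$ further faces, and Euler's formula gives $g(G)\le |E(G')|/4-|V(G')|/2+O(1)$. The matching lower bound is immediate: bipartiteness forces $f(\Pi)\le |E(G')|/2$ in every embedding of $G'$, so $g(G)\ge |E(G')|/4-|V(G')|/2+1$.

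It remains to concentrate $|E(G')|/4-|V(G')|/2$ around $g(S)$. Writing $d_x$ for the degree of $x\in X$ in $G$, one checks the identity
\begin{equation*}
\frac{|E(G')|}{4}-\frac{|V(G')|}{2}=\sum_{x\in X}f(d_x)-\frac{n_2}{2},\qquad f(d)=\mathbf{1}[d\ge 2]\,\frac{d-2}{4}.
\end{equation*}
The variables $f(d_x)$ are i.i.d., bounded by $(n_2-2)/4$, and supported on $d\ge 3$, giving $\mathrm{Var}\bigl(\sum_x f(d_x)\bigr)\le n_1\mathbb{E}[f(d_x)^2]=O(n_1p^3)$, while $\mathbb{E}\bigl[\sum_x f(d_x)\bigr]=\Theta(n_1p^3)\to\infty$ under the hypothesis $p\gg n_1^{-1/3}$. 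Chebyshev's inequality therefore yields $(1\pm o(1))$ relative concentration, and the same deterministic computation as in Lemma~\ref{lem:g(S)} identifies the mean with $g(S)(1+o(1))$.

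The main obstacle is this last step. A naive comparison of $G$ to $S$ via the total edge count $|E(G)|$ already fluctuates on the scale $\sqrt{n_1p}$, which dominates $g(S)\sim n_1p^3$ whenever $p=n_1^{-1/3+o(1)}$; so one cannot hope to treat $|E(G')|/4$ and $|V(G')|/2$ independently. The decisive observation is that the leading linear contribution $|E(G)|/4$ is exactly absorbed against $-n_1/2$ in expectation, leaving the genuinely small random variable $\sum_x f(d_x)$, whose variance is driven by $p^3$ rather than by $p$ because $f$ vanishes on $\{0,1,2\}$. Once this cancellation is made precise, the sandwich closes throughout the entire range $p\gg n_1^{-1/3}$.
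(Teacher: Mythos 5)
Your proof is correct, but it takes a genuinely different route from the one in the paper. The paper never computes $g(G)$ directly: it deletes the degree-$\le 2$ vertices of $X$ (at a cost of $\binom{n_2}{2}$ in genus), shows via Chebyshev that each class $F_G(Y')$ with $|Y'|\ge 3$ has cardinality within $\tfrac{\varepsilon}{10 n_2 2^{n_2}}p^3 n_1$ of $|F_S(Y')|$, concludes that $S$ and $G'$ differ in at most $O(1)+\tfrac{\varepsilon}{10}p^3 n_1$ edges, and finishes with the edge-Lipschitz property of the genus together with the bound $g(S)>\tfrac15 p^3 n_1$ from Lemma~\ref{lem:g(S)}. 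You instead run the embedding construction of Lemma~\ref{lem:standard} on $G$ itself (which is legitimate, since that construction only uses the partition of $X$ into classes inducing complete bipartite graphs, never the specific cardinalities of the standard graph), obtaining the two-sided identity $g(G)=|E(G')|/4-|V(G')|/2+O(1)$, and then concentrate the degree functional $\sum_x f(d_x)$ with $f(d)=\mathbf{1}[d\ge2](d-2)/4$. Your reformulation makes explicit the cancellation that the paper encodes implicitly by restricting the comparison to classes with $|Y'|\ge3$: since $f$ vanishes on degrees $\le 2$, both the mean and the variance live at scale $n_1p^3$ rather than $n_1p$, which is exactly why the threshold $p\gg n_1^{-1/3}$ suffices. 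Your version has the minor advantages of treating the whole range $p\gg n_1^{-1/3}$ uniformly (the paper splits into $p\ll1$ and $p\gg n_1^{-1/n_2}$) and of bypassing the detour through $S$ for the embedding itself; the paper's version has the advantage of reusing Lemmas~\ref{lem:standard} and~\ref{lem:g(S)} as black boxes. One small point to tidy up in a final write-up: in the lower bound $g(G')\ge|E(G')|/4-|V(G')|/2+1$ you should note that $G'$ is a.a.s.\ connected (any two vertices of $Y$ have $\sim p^2n_1\to\infty$ common neighbours), or else apply Euler's formula componentwise and use additivity of the genus.
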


\begin{proof}
Let $V(G)=X(G)\sqcup Y(G)$ with $|X(G)|=n_1$ and $|Y(G)|=n_2$. For every $Y'\subseteq Y(G)$, where $|Y'|=m\geq1$, let $F_G(Y')=\{x\in X(G)\mid N(x)=Y'\}$. Then
\begin{equation}
\begin{split}
\mathbb{E}(|F_G(Y')|)&=p^m(1-p)^{n_2-m}n_1,\\
\mathbb{E}(|F_G(Y')|^2)&=p^{2m}(1-p)^{2n_2-2m}n_1(n_1-1)+p^m(1-p)^{n_2-m}n_1.
\end{split}
\end{equation}
For every $t>0$, by Chebyshev's inequality, we have
\begin{equation}\label{differ}
\begin{split}
\mathbb{P}\bigl(\bigl||F_G(Y')|&-\mathbb{E}(|F_G(Y')|)\bigr|\geq t\mathbb{E}(|F_G(Y')|)\bigr) \leq \frac{\mathbb{E}(|F_G(Y')|^2) - \mathbb{E}^2(|F_G(Y')|)}{t^2\mathbb{E}^2(|F_G(Y')|)} \\[1.5mm]
&\sim \frac{p^m(1-p)^{n_2-m}n_1}{t^2p^{2m}(1-p)^{2n_2-2m}n_1^2}
= \frac{1}{t^2p^m(1-p)^{n_2-m}n_1}\, .
\end{split}
\end{equation}

Suppose now that $p\gg n_1^{-\frac{1}{3}}$ and $m\geq3$. By taking $t = \tfrac{\varepsilon}{10n_2 2^{n_2}} p^{3-m}(1-p)^{(m-n_2)/2}$ in (\ref{differ}) we obtain that
\begin{equation}\label{differ2}
\begin{split}
\mathbb{P}\bigl(\bigl||F_G(Y')|-\mathbb{E}(|F_G(Y')|)\bigr| \geq \tfrac{\varepsilon}{10n_2 2^{n_2}} p^3(1-p)^{(n_2-m)/2}n_1\bigr)
\leq \frac{100n_2^2 4^{n_2}}{\varepsilon^2 p^{6-m}n_1}\leq o(1).
\end{split}
\end{equation}

Let $S$ be the standard graph of $\mathcal{G}_{n_1,n_2,p}$ with $V(S)=X(S)\sqcup Y(S)$. We may assume that $Y(S)=Y(G)=[n_2]$. Let $G'$ be the subgraph obtained from $G$ by deleting all vertices of degree at most 2 in $X(G)$. Observe that deleting vertices of degree at most 1 does not change the genus and that vertices of degree 2 form (at most) $\binom{n_2}{2}$ ``parallel'' classes, thus
\begin{equation}\label{eq:g and G'}
 g(G')\le g(G)\le g(G') + \binom{n_2}{2}.
\end{equation}

For every $Y'\subseteq Y$ (and $|Y'|\ge3$), we consider $F_{G'}(Y') = F_G(Y')$ and $F_S(Y')$. By (\ref{differ2}), these two sets have almost the same cardinality (a.a.s.). More precisely, we have a.a.s.
\begin{equation}\label{eq:G' vs S}
\begin{split}
  \sum_{|Y'|\ge3} \bigl| |F_G(Y')| - |F_S(Y')| \bigr|
  &\le \sum_{|Y'|\ge3} \bigl(\bigl| |F_G(Y')|-\mathbb{E}(|F_G(Y')|)\bigr|+1\bigr)\\[1mm]
  &\le 2^{n_2} (1 + \frac{\varepsilon}{2^{n_2}10n_2}p^3(1-p)^{(n_2-m)/2}n_1) \\
  &\le 2^{n_2} + \frac{\varepsilon}{10n_2}\,p^3 n_1.
\end{split}
\end{equation}

We first consider the case $p\ll 1$. Let $S^*$ be the subgraph obtained from $S$ by deleting all vertices of degree at most $2$ in $X(S)$. By a similar argument we used in (\ref{eq:g and G'}), we have $|g(S)-g(S^*)|=O(1)$. Then (\ref{eq:G' vs S}) implies, in particular, that $S^*$ can be obtained from $G'$ by adding and deleting at most $n_2 2^{n_2} + \tfrac{\varepsilon}{10}p^3 n_1$ edges a.a.s. Since adding an edge changes the genus by at most 1, and by Lemma \ref{lem:g(S)}, $g(S)> \tfrac{1}{5}p^3n_1\gg 1$ (if $n_1$ is large), we obtain that
$(1-\tfrac{1}{2}\varepsilon)g(S)\leq g(G')\leq(1+\tfrac{1}{2}\varepsilon)g(S)$ a.a.s.
Together with (\ref{eq:g and G'}) this implies the lemma.

Now suppose that $p=\Theta(1)$. Assume first that $(1-p)^{n_2-3}n_1\gg1$. In this case we take $t = \tfrac{\varepsilon p \Psi(p,n_2)}{15n_2 2^{n_2}}$ in (\ref{differ}), where $\Psi(p,n_2)$ is defined in Theorem \ref{thm:1.5}. Note that $t=O(1)$. Therefrom we conclude that with high probability
\begin{equation*}
\bigl| |F_G(Y')| - |F_S(Y')| \bigr| \leq
\frac{\varepsilon p \Psi(p,n_2)}{15n_2 2^{n_2}} |F_S(Y')|+O(1).
\end{equation*}
Now we derive similarly as above that $S$ can be obtained from $G$ by adding and removing less than $2n_2 2^{n_2} + \tfrac{\varepsilon}{10}\,p \Psi(p,n_2)n_1$ edges a.a.s., which is less than $\tfrac{\varepsilon}{5}\,p \Psi(p,n_2)n_1$ when $n_1$ is sufficiently large. The same conclusion as above follows.

Finally, suppose that $(1-p)^{n_2-3}n_1=O(1)$. By Lemma \ref{lem:g(S)} we have $g(S')\sim\frac{n_1}{4}(n_2-2).$ Thus by using a similar argument as we used above, (\ref{eq:G' vs S}) implies that $S^*$ can be obtained from $G'$ by adding or deleting at most $\varepsilon n/2$ edges a.a.s. Thus the same conclusion follows.
\end{proof}

We have all tools to prove the last main statement.

\begin{thm:associativity3}
Let $\varepsilon>0$ and $G\in\mathcal{G}_{n_1,n_2,p}$ where $n_1\gg 1$ and $n_2\geq3$ is a constant.
\begin{itemize}
\item[\rm{(a)}]
If $p\gg n_1^{-\frac{1}{3}}$ we have a.a.s. (as $n_1\to\infty$)
\begin{equation*}
(1-\varepsilon)\frac{n_1n_2p}{4}\,\Psi(p,n_2)\leq g(G)\leq(1+\varepsilon)\frac{n_1n_2p}{4}\,\Psi(p,n_2)
\end{equation*}
and
\begin{equation*}
(1-\varepsilon)\frac{n_1n_2p}{2}\,\Psi(p,n_2)\leq \widetilde{g}(G)\leq(1+\varepsilon)\frac{n_1n_2p}{2}\,\Psi(p,n_2),\\[1mm]
\end{equation*}
where $\Psi(p,n_2)=\sum_{i=2}^{n_2-1}\frac{i-1}{i+1}\binom{n_2-1}{i}(-p)^i$.
\item[\rm{(b)}]
If $n_1^{-\frac{1}{2}}\ll p\ll n_1^{-\frac{1}{3}}$, then a.a.s.
\begin{equation*}
g(G)=\biggl\lceil\frac{(n_2-3)(n_2-4)}{12}\biggr\rceil\quad and\quad \widetilde{g}(G)=\biggl\lceil\frac{(n_2-3)(n_2-4)}{6}\biggr\rceil
\end{equation*}
with a single exception that $\widetilde{g}(G)=3$ when $n_2=7$.
\item[\rm{(c)}]
If $p\ll n_1^{-\frac{1}{2}}$, then a.a.s. $g(G)=0$.
\end{itemize}
\end{thm:associativity3}
\begin{proof}
To prove part (a), we just combine Lemmas \ref{lem:g(S)} and \ref{lem:4.3}. The results for the non-orientable genus can be obtained by adding a crosscap to the orientable surface in which $G$ has a minimum genus embedding.

For case (b), when $Y'\subseteq Y(G)$ with $|Y'|=m\geq3$, we have
\begin{equation}
\mathbb{E}(|F_G(Y')|)=p^m(1-p)^{n_2-m}n_1=o(1).
\end{equation}
Then by Markov's inequality, $\mathbb{P}(|F_G(Y')|\geq1)=o(1)$. For the sets $Y_2\subseteq Y(G)$ with $|Y_2|=2$, by (\ref{differ}) we can see that for every $t>0$, $(1-t)p^2n_1\leq |F_G(Y_2)|\leq (1+t)p^2n_1$ a.a.s. That means if we remove all vertices with degree $1$ in $G$, we will obtain the complete graph $K_{n_2}$, in which each edge is replaced by roughly $p^2n_1$ internally disjoint paths of length $2$. By \cite{complete} we have $g(G)=g(K_{n_2})=\Bigl\lceil\frac{(n_2-3)(n_2-4)}{12}\Bigr\rceil$ a.a.s. (and similarly for $\tilde g(G)$, where the exception occurs when $n_2=7$).
This proves part (b).

To prove (c), note that when $p\ll n_1^{-1/2}$, none of the subdivided edges of $K_{n_2}$ from case (b) will occur (a.a.s.), and with high probability, every vertex in $X(G)$ will be of degree at most 1. Thus, $g(G)=0$ a.a.s.
\end{proof}

Note that in Theorem \ref{thm:1.5}, when $p=\Theta(n^{-\frac{1}{3}})$, a.a.s. the graph $G$ will be the Levi graph of $mK_{n_1}^3$, where $mK_{n_1}^3$ is the complete $3$-uniform multi-hypergraph of order $n_1$, and each triple has $m$ edges. This problem is hard and of independent interest as a generalization of Ringel-Youngs Theorem. We will discuss it in a separate paper \cite{JM2}.

\bibliographystyle{abbrv}
\bibliography{reference}

\begin{thebibliography}{10}

\bibitem{prob}
N.~Alon and J.~H. Spencer.
\newblock {\em The probabilistic method}.
\newblock Wiley Series in Discrete Mathematics and Optimization. John Wiley \&
  Sons, Inc., Hoboken, NJ, fourth edition, 2016.

\bibitem{dense}
D.~Archdeacon and D.~A. Grable.
\newblock The genus of a random graph.
\newblock {\em Discrete Math.}, 142(1-3):21--37, 1995.

\bibitem{ran}
B.~Bollob\'as.
\newblock {\em Random graphs}, volume~73 of {\em Cambridge Studies in Advanced
  Mathematics}.
\newblock Cambridge University Press, Cambridge, second edition, 2001.

\bibitem{Diestel}
R.~Diestel.
\newblock {\em Graph theory}, volume 173 of {\em Graduate Texts in
  Mathematics}.
\newblock Springer, Heidelberg, fourth edition, 2010.

\bibitem{FR}
P.~Frankl and V.~R\"odl.
\newblock Near perfect coverings in graphs and hypergraphs.
\newblock {\em European J. Combin.}, 6(4):317--326, 1985.

\bibitem{JM}
Y.~Jing and B.~Mohar.
\newblock Efficient polynomial-time approximation scheme for the genus of dense
  graphs.
\newblock {\em IEEE 59th Annual Symposium on Foundations of Computer Science
  (FOCS)}, pages 719--730, 2018.

\bibitem{JM2}
Y.~Jing and B.~Mohar.
\newblock The genus of complete 3-uniform hypergraphs.
\newblock {\em J. Combin. Theory Ser. B}, 141:223--239, 2020.

\bibitem{top}
B.~Mohar and C.~Thomassen.
\newblock {\em Graphs on surfaces}.
\newblock Johns Hopkins Studies in the Mathematical Sciences. Johns Hopkins
  University Press, Baltimore, MD, 2001.

\bibitem{non}
T.~D. Parsons, G.~Pica, T.~z. Pisanski, and A.~G.~S. Ventre.
\newblock Orientably simple graphs.
\newblock {\em Math. Slovaca}, 37(4):391--394, 1987.

\bibitem{PS}
N.~Pippenger and J.~Spencer.
\newblock Asymptotic behavior of the chromatic index for hypergraphs.
\newblock {\em J. Combin. Theory Ser. A}, 51(1):24--42, 1989.

\bibitem{Ringel}
G.~Ringel.
\newblock Das {G}eschlecht des vollst\"andigen paaren {G}raphen.
\newblock {\em Abh. Math. Sem. Univ. Hamburg}, 28:139--150, 1965.

\bibitem{complete}
G.~Ringel and J.~W.~T. Youngs.
\newblock Solution of the {H}eawood map-coloring problem.
\newblock {\em Proc. Nat. Acad. Sci. U.S.A.}, 60:438--445, 1968.

\bibitem{genus}
V.~R\"odl and R.~Thomas.
\newblock On the genus of a random graph.
\newblock {\em Random Structures Algorithms}, 6(1):1--12, 1995.

\bibitem{Stahl}
S.~Stahl.
\newblock On the average genus of the random graph.
\newblock {\em J. Graph Theory}, 20(1):1--18, 1995.

\bibitem{NPC}
C.~Thomassen.
\newblock The graph genus problem is {NP}-complete.
\newblock {\em J. Algorithms}, 10(4):568--576, 1989.

\bibitem{Y}
J.~W.~T. Youngs.
\newblock Minimal imbeddings and the genus of a graph.
\newblock {\em J. Math. Mech.}, 12:303--315, 1963.

\end{thebibliography}

\end{document}